\theoremstyle{definition}
\numberwithin{equation}{section}
\newtheorem{tw}{Theorem}[section]
\newtheorem{lm}[tw]{Lemma}
\newtheorem{pr}[tw]{Proposition}
\theoremstyle{definition}
\newtheorem{df}{Definition}[section]
\newtheorem{uw}[tw]{Remark}
\newcommand{\R}{\mathbb{R}}
\newcommand{\Z}{\mathbb{Z}}
\newcommand{\N}{\mathbb{N}}
\newcommand{\T}{\mathbb{T}}
\newcommand{\Q}{\mathbb{Q}}
\newcommand{\cP}{\mathcal{P}}
\newcommand{\cT}{\mathcal{T}}
\newcommand{\bea}{\begin{eqnarray}}
  \newcommand{\eea}{\end{eqnarray}}
  \newcommand{\beab}{\begin{eqnarray*}}
  \newcommand{\eeab}{\end{eqnarray*}}
\renewcommand{\a}{\alpha}
  \newcommand{\be}{\begin{equation}}
  \newcommand{\ee}{\end{equation}}
\providecommand{\noopsort}[1]{}
\begin{document}


\baselineskip=17pt


\title{Product of two Kochergin flows with different exponents is not standard}

\author{Adam Kanigowski\\
Department of Mathematics\\
The Pennsylvania State University\\
University Park, PA 16802,USA\\
E-mail:adkanigowski@gmail.com
\and
Daren Wei\footnote{D. W. was partially supported by the NSF grant
DMS-16-02409}\\
Department of Mathematics\\
The Pennsylvania State University\\
University Park, PA 16802, USA\\
E-mail:duw170@psu.edu}

\date{}

\maketitle


\renewcommand{\thefootnote}{}

\footnote{2010 \emph{Mathematics Subject Classification}: Primary 37A35; Secondary 37A05.}

\footnote{\emph{Key words and phrases}: Standardness, Kakutani equivalence, Smooth flows on surfaces.}

\renewcommand{\thefootnote}{\arabic{footnote}}
\setcounter{footnote}{0}


\begin{abstract}
We study the standard(zero entropy loosely Bernoulli or loosely Kronecker) property for products of Kochergin smooth flows on $\mathbb{T}^2$ with one singularity. These flows can be represented as special flows over irrational rotations of the circle and under roof functions which are smooth on $\mathbb{T}^2\setminus \{0\}$ with a singularity at $0$. We show that there exists a full measure set $\mathscr{D}\subset\mathbb{T}$ such that the product system of two Kochergin flows with different power of singularities and rotations from $\mathscr{D}$ is not standard.
\end{abstract}

\section{Introduction}
Standardness\footnote{The property of being standard  is often called loosely Bernoulli (with zero entropy) or loosely Kronecker. In this paper we will use the name standardness.} is a concept introduced by A. Katok\cite{Katok1} and J. Feldman \cite{Feldman}, \cite{Katok2}. In \cite{Feldman} first example of non-standard transformations were shown to exist by cutting and stacking method. First non-standard smooth examples on smooth manifolds (preserving a smooth measure) were constructed in \cite{Katok2}. Some other non-standard examples were constructed in \cite{OrnRudWei}. M. Ratner, \cite{Ratner1}, gave a natural (algebraic) example of a non-standard system. In \cite{Ratner1} it is shown that the cartesian product of the horocycle flow is not standard (the horocycle flow itself being standard, \cite{Rat3}). This gives natural examples in the smooth category in dimension $6$.

The aim of this paper is to study standardness for products from a different natural class of smooth flows. This are so called Kochergin flows, \cite{Koch}, which are smooth flows on $\T^2$ with one (degenerated) fixed point. One constructs them by slowing down the orbits of a linear flow on $\T^2$ (around the fixed point). Kochergin flows share many dynamical properties with the horocycle flows, i.e. they are standard (hence have $0$ entropy), mixing \cite{Koch}, some of them are mixing of all orders \cite{KanigowshiFayad}, have Lebesgue spectrum \cite{FFK} and the orbit growth is polynomial \cite{Kanigowski}. In this paper we show that the product of two Kochergin flows with one singularity and different exponents is not standard, which is one more dynamic feature showing similarity to the horocycle flows. To state our results more precisely we will pass to special representations of Kochergin flows with one singularity. They can be represented as special flows over irrational rotations and roof functions which are $C^2(\T\setminus \{0\})$ and the asymptotic behaviour around $0$ is given by $x^{\gamma}$,$-1<\gamma<0$ (see Section \ref{fuc} for a precise description).It follows that every Kochergin flow $(T_t)_{t\in \R}$ is given by a pair $(\alpha,\gamma)\in \R\setminus\Q\times(-1,0)$ and hence we denote $(T_t)_{t\in\R}=\cT^{\alpha,\gamma}$. With this notation we can state our main theorem.

\begin{tw}\label{MainTheorem}
There exists a full measure set $\mathscr{D}\subset \T$ such that for  every $\alpha_1,\alpha_2\in\mathscr{D}$ and every $\gamma_1,\gamma_2\in (-1,0)$, $\gamma_1\neq \gamma_2$, $\cT^{\alpha_1,\gamma_1}\times\cT^{\alpha_2,\gamma_2}$ is not standard.
\end{tw}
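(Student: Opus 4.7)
The plan is to prove non-standardness by establishing a uniform lower bound on the Feldman $\bar{f}$-distance between orbit segments of the product flow, in the spirit of Ratner's argument for the product of horocycle flows cited in the introduction. The central mechanism should be the \emph{different shearing rates} produced by the two distinct exponents $\gamma_1\neq\gamma_2$: each time a Kochergin orbit passes close to the singularity, it experiences a shear whose profile and magnitude are governed by $\gamma_i$, and I expect that a single time-change cannot simultaneously compensate for two qualitatively different shear profiles in the two factors.

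First, I would define $\mathscr{D}\subset\T$ as a full-measure set of rotation numbers satisfying suitable Diophantine-type conditions controlling the growth of denominators in the continued fraction expansion. Standard results on Birkhoff sums for roof functions behaving like $x^{\gamma}$ near $0$ (see the references on mixing and polynomial orbit growth in the excerpt) then give quantitative estimates for the return times to shrinking neighborhoods of the singularity and the corresponding sizes of the shearing increments produced between nearby orbits. From this I would extract, for each factor, a family of scales $T_n\to\infty$ together with a subset $E_n$ of the phase space of density bounded below, on which a pair of orbits starting $\varepsilon$-close in a suitable sense diverges along the flow by approximately $C_i\cdot T_n^{|\gamma_i|}$ in the flow direction at time $T_n$, with $C_i$ uniform.

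The heart of the proof is then a non-matching argument. Assume for contradiction that $\cT^{\alpha_1,\gamma_1}\times\cT^{\alpha_2,\gamma_2}$ is standard; fix $\delta>0$ small and apply the standardness criterion to obtain orbit segments of length $T_n$ that can be $\bar{f}$-matched to within $\delta$ by an increasing near-identity map $h\colon [0,T_n]\to[0,T_n]$. Restricting to the product set $E_n\times E_n$, the matching condition forces $h(t)-t$ to coincide, up to error $o(1)$, with the shear correction in \emph{each} coordinate, i.e.\ simultaneously with quantities $\tau_1(t)\sim c_1 t^{|\gamma_1|}$ and $\tau_2(t)\sim c_2 t^{|\gamma_2|}$. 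Since $|\gamma_1|\neq|\gamma_2|$, the ratio $\tau_1(t)/\tau_2(t)$ is unbounded as $t\to\infty$, so the two requirements are incompatible on a set of $t$ of positive density, forcing the mismatch in at least one factor to exceed a fixed positive threshold. This yields a uniform lower bound on $\bar{f}$, contradicting standardness.

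The main obstacle I expect is the quantitative synchronization step: one must ensure that the two factor orbits visit the singularities in a sufficiently \emph{independent} fashion so that the two shear corrections $\tau_1,\tau_2$ are genuinely realized simultaneously by positive-density pieces of the product orbit, and one must carefully separate the contribution of the singular passages from the oscillatory part of the Birkhoff sums coming from the smooth tail of the roof function. The delicate point is to make this work for \emph{every} pair of rotations in a fixed full-measure set $\mathscr{D}$ and for \emph{every} pair of distinct exponents, so the Diophantine hypotheses defining $\mathscr{D}$ and the constants in the shear estimates must be chosen uniformly in $\gamma_1,\gamma_2\in(-1,0)$. Handling this uniformity, together with the non-matching calculation at the level of the $\bar{f}$-metric, will be the technical core of the argument.
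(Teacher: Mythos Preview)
Your core intuition—that a single time-change cannot simultaneously absorb shears growing at two distinct polynomial rates—is exactly the mechanism the paper exploits, but the proposal has a genuine gap in converting this into an $\bar{f}$ argument. In the $\bar{f}$ setting you compare two \emph{arbitrary} points $(x,y)$ and $(x',y')$, not a single pair of $\epsilon$-close orbits. The shear in the $i$-th factor is not $c_i\, t^{|\gamma_i|}$ with a uniform constant; it is (by Lemma~\ref{ConDer}) of order $d_H^{(i)}\cdot t^{1+|\gamma_i|}$, where $d_H^{(1)}=d_H(x,x')$ and $d_H^{(2)}=d_H(y,y')$ are a priori unrelated. For any ratio $d_H^{(1)}/d_H^{(2)}$ there is a time at which the two shears coincide, so your ``ratio is unbounded hence incompatible on positive density'' step does not immediately yield a contradiction. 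The paper resolves this with a genuine two-scale test (Lemma~\ref{EstLemma}(2) together with Definition~\ref{eps0}): one probes the matching at both $t\sim R_w^{1-\epsilon_0}$ and $t\sim R_w^{1/(1+|\gamma_2|)+\epsilon_0}$, where $R_w^{-1}=\max(d_H^{(1)},d_H^{(2)})$, and shows the matching cannot be good at both scales; which scale kills it depends on which coordinate realizes the maximum.

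A second missing structural ingredient is control of the horizontal distances along the matching. Your argument tacitly treats $d_H^{(i)}$ as fixed so that the shear is a clean power of $t$, but under an arbitrary matching these distances can jump. The paper first proves a dichotomy (Lemma~\ref{EstLemma}(1)): on windows short relative to $R_w$, either the base motion is literally isometric (horizontal distances are preserved) or the horizontal distance jumps by a factor $\geq 100$. This lets one stratify the matching by horizontal scale (the sets $A_j^{R,m}$ in Proposition~\ref{PropEst}) and run the shear-incompatibility argument stratum by stratum; summing the resulting bounds $|A_j^{R,m}|\leq R/j^{1.4}$ shows any good matching must contain a pair with $L_H\leq R^{-1/(1-\epsilon_0)}$ (Proposition~\ref{PropClose}), whence the $\bar{f}$-ball has small measure. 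Without the stratification and the isometric dichotomy, your constants $c_1,c_2$ are not constant and the argument does not close. (Minor point: the shear exponent is $1+|\gamma_i|$, not $|\gamma_i|$.)
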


Let us emphasize that our proof in the current form rely on the fact $\gamma_1\neq \gamma_2$ and therefore we don't have a complete analogue of Ratner's result (see also Remark \ref{2.4}).
As a consequence of Theorem \ref{MainTheorem} we get natural examples of smooth non-standard systems in dimension $4$. Actually (see Remark \ref{noni}) our result gives uncountably many non-isomorphic, non-standard systems on $\T^4$.



\paragraph{Plan of the paper.}
In Section \ref{def} we give definitions of standardness, special flows, Kochergin flows and introduce some notations. Finally we give several lemmas based on Denjoy-Koksma inequality. In Section \ref{proofthmmain}, we state Proposition \ref{PropClose}, which describe a relation between $\bar{f}-$metric and the metric on the flow space. In Section \ref{proofpropropclose}, we conduct the proof of  Proposition \ref{PropClose} by  dividing the matching sequence (the sets $A_j^{R,m}$) along exponential scale (see Proposition \ref{PropEst}). In Section \ref{proofproest}, we prove Proposition \ref{PropEst} by Lemma \ref{EstLemma}, which is the most technical part and important part of our paper. In Section \ref{proofestlemma}, we finish the proof of Lemma \ref{EstLemma}.

\section{Basic definitions}\label{def}
\subsection{Standard}
Following Feldman we recall the definition of $\bar{f}$ metric. For two finite words (over a finite alphabet) $A=a_1...a_k$ and $B=b_1...b_k$ any pair of strictly increasing sequences $(i_s,j_s)_{s=0}^r$ such that $a_{i_s}=b_{j_s}$ for $s=0,1,...r$ is called a {\em matching} between $A,B$.
Then
\begin{displaymath}
\bar{f}_k(A,B)=1-\frac{r}{k},
\end{displaymath}

where $r$ is the maximal cardinality over all matchings between $A$ and $B$.
For zero entropy measure-preserving $T:(X,\mathscr{B},\mu)$, a finite partition $\cP=(P_0,P_1,...,P_r)$ of $X$ and a number $k\geq 0$ we denote $\cP_0^{k}(x):=x_0x_1...x_k$, where
$x_i\in\{0,1,...,r\}$ is such that $T^i(x)\in P_{x_i}$ for $i=0,1,...k$.
\begin{df}[\cite{Feldman}]\label{def:LB}
The process $(T,\cP)$ is said to be {\em standard (zero entropy loosely Bernoulli)} if for every $\epsilon>0$ there exists $N_{\epsilon}\in \mathbb{N}$ and a set $A_{\epsilon}\in \mathbb{B}$, $\mu(A_{\epsilon})>1-\epsilon$ such that for every $x,y\in A_{\epsilon}$ and $N\geq N_{\epsilon}$
\begin{displaymath}
\bar{f}_N(\cP_0^N(x),\cP_0^N(y))<\epsilon.
\end{displaymath}

The automorphism $T$ is {\em standard} if for every finite measurable partition $\cP$, the process $(T,\cP)$ is standard.
\end{df}

\begin{uw}
In order to simplify the notation, we denote \\ $\bar{f}_N^{\cP}(x,y)=
\bar{f}_N(\cP_0^N(x),\cP_0^N(y))$. If the parition is fixed, we will simply write $\bar{f}_N(x,y)$.
\end{uw}
We  have the following definition of standard for flows:
\begin{df}\label{lbflow} A {\em flow} $(T_t)_{t\in\R}$ is standard if it has a section on which the first return transformation is standard.
\end{df}
The following result proved in \cite{OrnRudWei} (for the $0$ entropy case) will be used in the paper (see also \cite{Tho}):
\begin{tw}\label{orw}
A flow $(T_t)_{t\in\R}$ is standard if and only if the time one automorphism $T_1$ is standard\footnote{This result was pointed to us by J-P. Thouvenot.}.
\end{tw}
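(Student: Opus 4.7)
The plan is to reduce both implications to Feldman's invariance of standardness under Kakutani equivalence, and to verify that the time-one map $T_1$ of any ergodic flow $(T_t)$ is Kakutani equivalent to the first-return map $S$ to any nice measurable cross-section $Y$. Feldman's theorem states that for ergodic zero-entropy automorphisms, standardness (equivalently, zero-entropy loosely Bernoulli) is a Kakutani-equivalence invariant. Hence standardness of $T_1$ and standardness of $S$ become logically equivalent once Kakutani equivalence of $T_1$ and $S$ is established, and standardness of the flow is by definition standardness of some such $S$.

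To produce the Kakutani equivalence, I would combine the Ambrose--Kakutani representation of $(T_t)$ as a special flow over $(S,Y,\nu)$ with roof $\tau$ with the following tower construction. Take a Rokhlin tower $\{T_1^{-i}B\}_{i<N}$ for $T_1$ with $N\gg\operatorname{ess\,sup}\tau$, and intersect its base $B$ with a flow cross-section of very small return time to obtain a set $Z$. Then $Z$ is simultaneously a cross-section for the flow and a sweep-out set for $T_1$, and one argues that the first-return maps of $T_1$ and of $(T_t)$ to $Z$ coincide almost surely: between two consecutive $T_1$-visits to $Z$ the flow stays inside the tower and only re-enters $Z$ at integer times. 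Thus the induced map $T_1|_Z$ equals the induced map of the flow first-return $R$ to $Z$, and $R$ is Kakutani equivalent to $S$ because both are first-return maps to cross-sections of the same flow (Rudolph's theorem). It follows that $T_1$ is Kakutani equivalent to $S$, which combined with Feldman's invariance yields both directions of the theorem.

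The main obstacle is the coincidence of $T_1$-returns to $Z$ with flow-returns to $Z$: a priori the flow might cross $Z$ at a non-integer time. Controlling this requires either a careful choice of $Z$, parameterizing flow-time along $Z$ so that crossings occur only at integer times on a large subset, or passing to a direct symbolic argument. In the latter approach one transfers a good $\bar f$-matching of $T_1$-names over $N$ steps into a good $\bar f$-matching of $S$-names over $k\approx N/\int\tau\,d\nu$ steps, using the Birkhoff-sum dictionary $\tau_n(y)=\sum_{i<n}\tau(S^iy)\approx n\int\tau\,d\nu$ (uniform on a large set, by the ergodic theorem or Denjoy--Koksma) as the time-to-index conversion; the resulting slippage between flow-time and $S$-index costs only a controlled amount in $\bar f$, which can be absorbed into $\epsilon$. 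The converse transfer is symmetric. Carrying this out carefully, while keeping track of the $0$-entropy assumption at each induction/speed-up step, is exactly the content of \cite{OrnRudWei}.
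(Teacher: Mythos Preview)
The paper does not prove this theorem; it quotes it from \cite{OrnRudWei} (with a footnote crediting Thouvenot) and uses it as a black box. So there is no in-paper proof to compare your proposal against.

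As for the proposal itself: the overall strategy---reduce to the Kakutani-equivalence invariance of standardness and show that $T_1$ is Kakutani equivalent to the first-return map $S$ on a section---is the right one and is essentially what \cite{OrnRudWei} does. But your concrete tower construction does not close the gap you yourself flag: taking $Z$ to be the intersection of a Rokhlin-tower base for $T_1$ with a fine flow section gives no reason for flow crossings of $Z$ to occur only at integer times, so the claimed coincidence of the $T_1$-induced map and the flow first-return map on $Z$ is unjustified as written. Your alternative ``symbolic'' route (transporting $\bar f$-matchings between $T_1$-names and $S$-names via the Birkhoff dictionary $\tau_n(y)\approx n\int\tau$) is closer to a workable argument, but is also only a sketch. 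Since you conclude by saying that the careful execution ``is exactly the content of \cite{OrnRudWei}'', your proposal is best read as a plausibility outline for the cited result rather than an independent proof; for the purposes of this paper, citing \cite{OrnRudWei} is what the authors intended and is sufficient.
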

We will use the above theorem for the proof of Theorem \ref{MainTheorem} by considering the time one map of the product flow. We will also use the following definition:

\begin{df}\label{epsgood}
Fix  a partition $\cP$, $N\in \N$ and $x,y\in X$. We say that a matching $(i_r,j_r)_{r=0}^{R(N)}$ of $x,y$ is {\em $(\cP,\epsilon)$-good} if
$R(N)\geq (1-\epsilon)N$.

\end{df}


\subsection{Special flows}
Let $\lambda$ be the Lebesgue measure on $\R$ (we will also denote by $\lambda$ the Lebesgue measure on $\T$, it will be clear from the context which space are we dealing with). Let $T:(X,\mathscr{B},\mu)\to(X,\mathscr{B},\mu)$ be an automorphism and $f\in L^1(X,\mathscr{B},\mu)$, $f>0$. The special flow $(T_t^f)_{t\in\mathbb{R}}$ acts on the space $(X^f,\mathscr{B}^f,\mu^f)$, where $X^f:=\{(x,s):x\in X,0\leq s<f(x)\}$, $\mathscr{B}^f:=\mathscr{B}\otimes\mathscr{B}(\mathbb{R})$ and $\mu^f=\mu\times\lambda$. Under the action of the flow $(T_t^f)_{t\in\R}$ each point in $X^f$ moves vertically with unit speed and we identify the point $(x,f(x))$ with $(Tx,0)$. More precisely, if $x=(x_h,x_v)\in X^f$ ($h$ and $v$ stand for horizontal and vertical coordinate) then
\begin{equation}\label{specfl}
  T_t^f(x_h,x_v)=(T^{N(x,t)}x_h,x_v+t-f^{(N(x,t))}(x_h)),
\end{equation}
$N(x,t)\in\mathbb{Z}$ is unique such that
\begin{displaymath}
 f^{(N(x,t))}(x_h)\leq x_v+t<f^{(N(x,t)+1)}(x_h)
\end{displaymath}
and
$$
f^{(n)}(x_h)=\left\{\begin{array}{ccc}
f(x_h)+\ldots+f(T^{n-1}x_h) &\mbox{if} & n>0\\
0&\mbox{if}& n=0\\
-(f(T^nx_h)+\ldots+f(T^{-1}x_h))&\mbox{if} &n<0.\end{array}\right.$$
Recall that if $X$ is a metric space with metric $d$, then so is $X^f$ with the product metric which we denote by $d^f$.
\subsection{Flows under consideration}\label{fuc}
Flows that we consider have the following special representation:
\begin{itemize}
    \item $T=R_\alpha:\T \to\T$,  $R_\alpha x=x+\alpha \mod 1$;
	 \item $f$ is a $C^2(\T\setminus\{0\})$ function which satisfies for some $-1<\gamma<0$ and $A_1,B_1>0$
\begin{equation}\label{Kor1}
\lim_{x\to0^+}\frac{f(x)}{x^{\gamma}}=A_1\text{ and }\lim_{x\to0^-}\frac{f(x)}{(1-x)^{\gamma}}=B_1,
\end{equation}
\begin{equation}\label{Kor2}
\lim_{x\to0^+}\frac{f'(x)}{x^{-1+\gamma}}=\gamma A_1\text{ and }\lim_{x\to0^-}\frac{f'(x)}{(1-x)^{-1+\gamma}}=-\gamma B_1,
\end{equation}
\begin{equation}\label{Kor3}
\begin{aligned}
&\lim_{x\to0^+}\frac{f''(x)}{x^{-2+\gamma}}=\gamma(\gamma-1)A_1\text{ and }\lim_{x\to0^-}\frac{f''(x)}{(1-x)^{-2+\gamma}}=\gamma(\gamma-1)B_1.
\end{aligned}
\end{equation}
\end{itemize}
We call such flows {\em Kochergin flows} and denote them by $\cT^{\a,\gamma}$. In what follows let $(q_{\alpha,n})_{n\geq 1}$ denote the sequence of denominators of $\alpha\in \R\setminus\Q$ Let us introduce the following set:
\begin{equation}\label{setd}
\mathscr{D}:=\{\a\in \R\setminus\Q: q_{\alpha,n+1}<C(\a)q_{\alpha,n}\log q_{\alpha,n}(\log n)^2\},
\end{equation}

It follows from Khinchin theorem, \cite{Kinchin}, that $\lambda(\mathscr{D})=1$. By Theorem \ref{orw}, Theorem \ref{MainTheorem} follows by the following :
\begin{tw}\label{mainth} Let $\gamma_1,\gamma_2\in(-1,0)$, $\gamma_1\neq \gamma_2$ and let $\alpha_1,\alpha_2\in \mathscr{D}$. Then the time one map of $\cT^{\alpha_1,\gamma_1}\times \cT^{\alpha_2,\gamma_2}$ is not standard.
\end{tw}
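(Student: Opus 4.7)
The plan is to argue by contradiction, following the section outline suggested by the authors. Assume that the time-one map $T := T_1^{f_1} \times T_1^{f_2}$ of $\cT^{\alpha_1,\gamma_1}\times\cT^{\alpha_2,\gamma_2}$ is standard. Then for every tolerance $\epsilon>0$ and every refined measurable partition $\cP$ of $(\T\times[0,f_1])\times(\T\times[0,f_2])$ (for example into small ``boxes'' which also separate horizontal from vertical sides of the singularity), there is a large set of pairs of points $((x_1,x_2),(y_1,y_2))$ for which, for all sufficiently large $N$, one can find a $(\cP,\epsilon)$-good matching $(i_r,j_r)_{r=0}^R$ between their $\cP$-names. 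My first task, corresponding to Proposition~\ref{PropClose}, will be to convert this symbolic information into metric information in the flow: by choosing the partition $\cP$ arbitrarily fine and applying a Lusin/Egorov argument in each coordinate, the matching forces
\[
d^{f_1}\bigl(T_{i_r}^{f_1}x_1,T_{j_r}^{f_1}y_1\bigr)+d^{f_2}\bigl(T_{i_r}^{f_2}x_2,T_{j_r}^{f_2}y_2\bigr)<\delta
\]
for most $r\le R$, where $\delta=\delta(\epsilon)\to 0$. In particular, there is a ``time-change'' $r\mapsto(i_r,j_r)$ that is simultaneously an $\epsilon$-matching of the orbits of the two factor flows under a common reparametrization.

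The second step, following Proposition~\ref{PropEst}, is to slice the index interval $[0,N]$ along dyadic (exponential) scales $[2^m,2^{m+1}]$ and to localize the matching to each scale. On a given scale of length $n\simeq 2^m$, the amount by which the indices $i_r$ and $j_r$ are allowed to drift apart is controlled by the oscillation of the Birkhoff sums $f_\ell^{(n)}$ of the Kochergin roof near the singularity. Using the Denjoy--Koksma type estimates sketched after Definition~2.3 of special flows, together with the Diophantine bound $q_{\alpha,n+1}<C(\alpha)q_{\alpha,n}\log q_{\alpha,n}(\log n)^2$ that defines $\mathscr D$, the dominant contribution to the shear experienced by orbits of $\cT^{\alpha_\ell,\gamma_\ell}$ over $n$ iterates of $R_{\alpha_\ell}$ is governed by the closest visit of $x_\ell,R_{\alpha_\ell}x_\ell,\dots,R_{\alpha_\ell}^{n-1}x_\ell$ to $0$, and scales essentially as a fixed positive power of $n$ depending only on $\gamma_\ell$ (through the asymptotics \eqref{Kor1}--\eqref{Kor3}).

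The heart of the argument, corresponding to Lemma~\ref{EstLemma}, will then be to show that the two scaling exponents, one determined by $\gamma_1$ and one by $\gamma_2$, are \emph{incompatible}: the time-shift $j_r-i_r$ that the matching needs to compensate for the shear in the first coordinate at scale $n$ is of a different polynomial size than the one it needs in the second coordinate, because $\gamma_1\ne\gamma_2$. Quantitatively, if $j_r-i_r$ stabilizes the first special flow up to error $\delta$ on a given scale, it must move the second special flow by a quantity of order $n^{\vep(\gamma_1,\gamma_2)}\gg\delta$, so that the set of ``good'' $r$ at that scale has to be much smaller than a $(1-\epsilon)$-proportion. Summing these deficits along dyadic scales then forces $R/N<1-\epsilon$, contradicting the assumption of a $(\cP,\epsilon)$-good matching, and hence contradicting standardness of $T$.

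The main obstacle I expect is precisely Lemma~\ref{EstLemma}: getting sharp, \emph{simultaneous} control of the shears in the two coordinates near both singularities, with the quality of the bound surviving the Diophantine deterioration permitted by $\mathscr D$. The delicate points are (i) ensuring that, for typical $x_\ell$, the ``bad'' iterates at which $R_{\alpha_\ell}^k x_\ell$ is abnormally close to $0$ do not conspire between the two flows to cancel the mismatch in $\gamma$-exponents, and (ii) converting the additive Denjoy--Koksma cancellation on non-singular Birkhoff sums into a rigid lower bound for the asymmetry of the two shears. Once these are in place, the exponential-scale decomposition of Proposition~\ref{PropEst} and the passage from matching to flow proximity in Proposition~\ref{PropClose} are comparatively routine, and Theorem~\ref{mainth} (hence Theorem~\ref{MainTheorem} via Theorem~\ref{orw}) follows.
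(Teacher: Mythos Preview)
Your high-level intuition---that the polynomial shear rates $n^{1+|\gamma_1|}$ and $n^{1+|\gamma_2|}$ cannot be simultaneously compensated by a single time-change when $\gamma_1\neq\gamma_2$---is correct and is indeed the engine of the proof. However, the architecture you describe differs from the paper's at every layer, and in one place this leads to a genuine gap.

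First, two structural mismatches. The dyadic decomposition in Proposition~\ref{PropEst} is \emph{not} in time: the sets $A_j^{R,m}$ partition the matching indices according to the \emph{horizontal distance} scale $L_H(r)\in(2^{-j-1},2^{-j}]$, not according to which block $[2^m,2^{m+1}]$ the index $r$ lies in. Correspondingly, Lemma~\ref{EstLemma} is a local statement around a single matching pair $(i_w,j_w)$: part~(1) says that within a window of size $R_w/\log^5 R_w$ (where $R_w^{-1}=L_H(w)$) the horizontal distances either stay \emph{exactly} equal (isometric motion in the base) or jump by a factor $\ge 100$; part~(2) says that, under the isometric alternative, on at least one of two specific polynomial windows $R_w^{1/(1+|\gamma_2|)+\epsilon_0}$ or $R_w^{1-\epsilon_0}$ the number of good matches is only $\text{(window length)}/\log^2 R_w$. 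Summing these $1/\log^2$ deficits over the horizontal-distance scales $j$ is what gives the $R/j^{1.4}$ bound in \textbf{(c)}.

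Second, and more seriously, your endgame is inverted. You aim to conclude ``$R/N<1-\epsilon$, contradicting the existence of a good matching'' for pairs in the standard set. But this cannot hold for \emph{all} pairs: if $(x',y')$ already satisfies $L_H\le R^{-1/(1-\epsilon_0)}$ relative to $(x,y)$, the shears over $[0,R]$ are too small to obstruct anything, and a $(\cP_m,1/100)$-good matching genuinely exists. The paper does not show that good matchings fail; it shows the contrapositive as Proposition~\ref{PropClose}: \emph{any} $(\cP_m,1/100)$-good matching forces $L_H(r_0)\le R^{-1/(1-\epsilon_0)}$ at some time $r_0$. This constrains $x'_h$ to a union of $O(R)$ intervals of length $R^{-1/(1-\epsilon_0)}$, hence total measure $O(R^{-\epsilon_0/(1-\epsilon_0)})\ll 1/2$, so every $\bar f_R^{\cP_m}$-ball of radius $1/100$ has measure $<1/2$, which is what actually contradicts standardness. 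Your write-up is missing this ``good matching $\Rightarrow$ horizontally close $\Rightarrow$ small $\bar f$-ball'' step; without it the argument does not close.
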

Let us make two remarks.
\begin{uw}\label{2.4} The set $\mathscr{D}$ in Theorem \ref{mainth} is not optimal. The authors think for $\xi>0$ (sufficiently small) Theorem \ref{mainth} holds for
$$
\mathscr{D'}:=\{\a\in \R\setminus\Q: q_{\alpha,n+1}<C(\a)q_{\alpha,n}^{1+\xi}\}.
$$
This would however need some more exact estimates of Denjoy-Koksma type which would complicate (already technical) proofs. Therefore we restrict to a smaller set of irrationals. It is also interesting what happens if we consider the case $\gamma_1=\gamma_2$, in particular a (cartesian) square of a given flow. In this case the proof is more complicated and would require some additional work, but the authors think it is also possible to have analogous results with the same exponent. Let us just emphasize that in the current form the proofs rely quite strongly on the fact that the exponents are different.
\end{uw}

\begin{uw}\label{noni} Theorem \ref{mainth} also gives uncountably many non-isomorphic non standard smooth flows in dimension $4$. The non-isomorphism follows provided that $\gamma_1+\gamma_2\neq \gamma_1'+\gamma_2'$ since in this case one can use slow entropy type results for these flows obtained by the first author in \cite{Kanigowski} to distinguish corresponding flows. It seems that by a more careful analysis one can provide an asymptotics for the number of $\bar{f}$ balls for the product system. This way it should be possible to have uncountably many smooth, natural, non Kakutani equivalent flows on $\T^4$. This should be compared with Benhenda\cite{Benhenda}.
\end{uw}



\subsection{Notation}\label{not}
For $i=1,2$ let $\cT^{f_i}=(T^{f_i}_t)_{t\in\R}$ denote the flow $\cT^{\alpha_i,\gamma_i}$ acting on $\T^{f_i}$. Recall that metric $d_i$ on $\T^{f_i}$ is of the form $d_i(x_1,x_2)=d_H(x_1,x_2)+d_V(x_1,x_2)$, where $d_H$ and $d_V$ are usual distances on $\T$ and $\R$ respectively\footnote{$H$ and $V$ stand for respectively horizontal and vertical.}. For $A\subset \T$ let $A^f:=\{x=(x_h,x_v)\in\T^f:x_h\in A\}$. A partition $\cP$ of $\T^{f_1}\times \T^{f_2}$ is fixed in this section.


\begin{df}\label{eano}
For 
$(x,y), (x',y')\in \T^{f_1}\times \T^{f_2}$, $N\in \N$  and a matching
$(i_s,j_s)_{s=0}^{R(N)}$ 
let
\begin{equation}
\begin{aligned}
&(x_w,y_w)=(T^{f_1}\times T^{f_2})_{i_w}(x,y),\\
&(x'_w,y'_w)=(T^{f_1}\times T^{f_2})_{j_w}(x',y'),
\end{aligned}
\end{equation}
for $w\in \{0,1,...,R(N)\}$. Moreover denote
\begin{equation}\label{hor}
L_H(r):=\max\{d_H(x_r,x'_r),d_H(y_r,y'_r)\},
\end{equation}
\begin{equation}\label{total}
L(r):=\max\{d^{f_1}(x_r,x'_r),d^{f_2}(y_r,y'_r)\}.
\end{equation}
\end{df}


Notice that $L_H(r)$ and $L(r)$ depend on $N, x,y,x',y'$ and a matching $(i_s,j_s)_{s=0}^{R(N)}$. In the proofs all of the above will be fixed and therefore we will use the short notation as in \eqref{hor} and \eqref{total}.

\begin{df}[\textbf{Matching balls}]
Fix $N\in\N$, $(x,y),(x',y')\in \mathbb{T}^{f_1}\times\mathbb{T}^{f_2}$ and a matching $(i_s,j_s)_{s=0}^{R(N)}$. For $0\leq r,w\leq R(N)$ and $U>0$, we define
\begin{equation}
(i_r,j_r)\in B((i_w,j_w),U)\Leftrightarrow\{i_r\in[i_w-U,i_w+U],\ \ j_r\in[j_w-U,j_w+U]\}.
\end{equation}
\end{df}

Recall that for a fixed $N$, a matching is a sequence $(i_s,j_s)_{s=0}^{R(N)}$. To simplify notation, we will write $(i_s,j_s)_{s=0}^N$ with the understanding that for $k>R(N)$, $(i_k,j_k)=(N+1,N+1)$.

\subsection{Choice of the partition}\label{part}
By Definition \ref{def:LB} and Theorem \ref{orw} to prove Theorem \ref{mainth} it is enough to find one partition of the form $\cP_1\times \cP_2$ of $\T^{f_1}\times \T^{f_2}$ for which the automorphism $T_1^{f_1}\times T^{f_2}_1$ is not standard. There is a natural sequence of partitions of $\T^{f_i}$, $i=1,2$. One has to cut off the cusp at some height and divide the compact part into rectangles of small diameters. More precisely, for $m\in \N$ and $i=1,2$ let $P^i_m$ be the partition obtained by dividing the set $K^i_m:= \{x\in \T^{f_i} : f_i(x_h) < 2^m\}$ into (finitely many) sets (atoms) of diameter between $\frac{1}{m}$ and $\frac{2}{m}$ (with a $C^1$ boundary) and taking $\T^{f_i}\setminus K^i_m$ to be one atom. We will show that for sufficiently large $m$ the automorphism is not standard for $\cP_m:=P^1_m\times P^2_m$.






\subsection{Denjoy-Koksma Estimates}
In this section we will state some lemmas describing the behaviour of ergodic sums over an irrational rotation $\alpha$ for functions with power singularities. The proofs follow mainly from the Denjoy-Koksma inequality and since the methods are classical for this type of functions (see e.g. \cite{FFK}) we will give the proofs in the appendix. We will consider a function $f$ satisfying \eqref{Kor1}, \eqref{Kor2}, \eqref{Kor3} with some $-1<\gamma<0$ and $A_1=B_1=1$. For simplicity we will also assume that $\int_{\mathbb{T}}fd\lambda=1$.



 Recall that $\alpha\in \mathscr{D}$ (see \eqref{setd}).

For $z\in\mathbb{T}$ denote
\begin{displaymath}
  z_{\min}^M=\min_{j\in[0,M)}\|z+j\alpha-0\|.
\end{displaymath}

The following lemma can be found \cite{FFK}, Lemma $3.1$.
\begin{lm}\label{koks}
For every $z\in\mathbb{T}$ and every $M\in\mathbb{Z}$, $|M|\in[q_{\alpha,s},q_{\alpha,s+1}]$ we have
\begin{equation}\label{DK1}
  f(z_{\min}^M)+\frac{1}{3}q_{\alpha,s}\leq f^{(M)}(z)\leq f(z_{\min}^M)+3q_{\alpha,s+1},
\end{equation}
\begin{equation}\label{DK2}
f'(z_{\min}^M)-8|\gamma|
q_{\alpha,s}^{1+|\gamma|}\leq |f'^{(M)}(z)|\leq f'(z_{\min}^M)+
8|\gamma|q_{\alpha,s+1}^{1+|\gamma|},
\end{equation}
and
\begin{equation}\label{DK3}
  f''(z_{\min}^M)\leq f''^{(M)}(z)\leq f''(z_{\min}^M)+
8|\gamma(\gamma-1)|q_{\alpha,s+1}^{2+|\gamma|}.
\end{equation}
\end{lm}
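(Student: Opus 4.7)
The plan is to isolate the singular closest-approach term in each ergodic sum and bound the remainder via the three-gap theorem together with the asymptotics \eqref{Kor1}--\eqref{Kor3}. Let $j_{\min}\in[0,M)$ be the index realizing $z_{\min}^M=\|z+j_{\min}\alpha\|$; by \eqref{Kor1}--\eqref{Kor3} the single term $f(z+j_{\min}\alpha)$ equals $f(z_{\min}^M)$ (and analogously for $f'$ and $f''$), so it remains to control the remainders
$$R_M(z):=\sum_{0\leq j<M,\ j\neq j_{\min}}f(z+j\alpha)$$
and the analogous $R_M'(z),R_M''(z)$ with $f$ replaced by $f'$ and $f''$.

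First, by the three-gap theorem applied to the orbit $\{z+j\alpha:0\leq j<M\}$ with $M\in[q_{\alpha,s},q_{\alpha,s+1}]$, the sorted distances $0<d_1\leq d_2\leq\cdots\leq d_{M-1}$ of the non-closest iterates to $\{0\}$ obey $d_k\geq c\,k/q_{\alpha,s+1}$, and in addition $d_k\leq C\,k/q_{\alpha,s}$ for $k\leq q_{\alpha,s}-1$. Plugging into the asymptotics $f(x)\sim x^{\gamma}$, $|f'(x)|\sim|\gamma|x^{-(1+|\gamma|)}$, $|f''(x)|\sim|\gamma(\gamma-1)|x^{-(2+|\gamma|)}$ (and their symmetric counterparts near $1$), the remainders reduce to power-sum estimates, giving the upper bounds
$$R_M(z)\lesssim\sum_{k=1}^{M-1}(k/q_{\alpha,s+1})^{\gamma}\lesssim q_{\alpha,s+1},\quad R_M'(z)\lesssim|\gamma|\,q_{\alpha,s+1}^{1+|\gamma|},\quad R_M''(z)\lesssim|\gamma(\gamma-1)|\,q_{\alpha,s+1}^{2+|\gamma|},$$
where the $R_M'$ and $R_M''$ estimates rely on the convergence of $\sum_{k\geq 1}k^{-(1+|\gamma|)}$ and $\sum_{k\geq 1}k^{-(2+|\gamma|)}$. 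The lower bound $R_M(z)\geq q_{\alpha,s}/3$ in \eqref{DK1} follows from the complementary inequality $d_k\leq Ck/q_{\alpha,s}$ applied to the first $q_{\alpha,s}-1$ sorted indices; the lower bound in \eqref{DK3} is immediate because $f''\geq 0$ near the cusp so $R_M''(z)\geq 0$; and the lower bound in \eqref{DK2} requires the Denjoy--Koksma-type cancellation between the two sides of the cusp, where $f'$ has opposite signs and $\int f'\,d\lambda=0$, yielding the sharper error $8|\gamma|q_{\alpha,s}^{1+|\gamma|}$ instead of $q_{\alpha,s+1}^{1+|\gamma|}$.

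The main obstacle I expect is the precise separation between the closest and second-closest passages to $0$: without it, $R_M'(z)$ and $R_M''(z)$ would each carry a second singular term comparable in magnitude to the dominant one and the bounds would fail. This separation follows from the best-approximation estimate $\|(i-j)\alpha\|\geq\|q_{\alpha,s}\alpha\|\asymp 1/q_{\alpha,s+1}$ for distinct $i,j\in[0,M)$ with $M\leq q_{\alpha,s+1}$, which is what keeps $z_{\min}^M$ the unique singular contribution. A second technical point is the cancellation needed for the improved lower bound in \eqref{DK2}, which uses $\int f'\,d\lambda=0$ together with bounded variation of $f'$ on fundamental domains avoiding the cusp. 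Once these two ingredients are in place, the remaining step is a routine calibration of the explicit constants $\tfrac{1}{3},3,8$ by tracking the implicit constants in \eqref{Kor1}--\eqref{Kor3} and the three-gap spacings; this is the classical argument of \cite{FFK} and the reason the authors defer the detailed computation to the appendix.
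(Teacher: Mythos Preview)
The paper does not give its own proof of this lemma; it simply cites \cite{FFK}, Lemma~3.1. Your outline --- isolate the closest-approach term $f(z+j_{\min}\alpha)$ and bound the remaining sum through the orbit spacing $d_k\gtrsim k/q_{\alpha,s+1}$ coming from best approximation / three-gap --- is precisely the classical mechanism behind Denjoy--Koksma estimates for power singularities, and matches what one finds in \cite{FFK}. The two technical points you single out (uniqueness of the dominant singular term via $\|(i-j)\alpha\|\geq\|q_{\alpha,s}\alpha\|$, and the need for cancellation in the $f'$ sum) are exactly the right ones.

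One correction: you justify the sharper error $q_{\alpha,s}^{1+|\gamma|}$ in the lower bound of \eqref{DK2} by invoking ``$\int_{\T} f'\,d\lambda=0$ together with bounded variation on fundamental domains''. Neither ingredient is available here: $|f'(x)|\sim|\gamma|\,x^{\gamma-1}$ near $0^+$ with $\gamma-1\in(-2,-1)$, so $f'\notin L^1(\T)$ and has infinite variation on any neighbourhood of $0$. The cancellation you want is real but is a \emph{pointwise pairing}, not an integral identity: because $A_1=B_1$, orbit points sitting at comparable distances on opposite sides of the singularity contribute to $f'^{(M)}$ with opposite signs and nearly equal magnitudes (compare \eqref{Kor2} at $0^+$ and $0^-$), and it is this symmetric matching, organised through the three-gap structure, that yields the improved remainder. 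Once you phrase the mechanism this way the rest of your sketch is fine.
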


The following sets will be used in the proof of Theorem \ref{mainth}.

Points in the sets below approach the singularity in a controled way and we have for them a nice upper bounds for ergodic sums of the first and second derivative. For simplicity of notation from now on we denote the sequence of denominators of $\alpha_1$ and $\alpha_2$ respectively by $(q_n)_{n\geq 1}$ and $(q'_n)_{n\geq 1}$.
\begin{equation}\label{sin}
\begin{aligned}
&S_n^1=\left\{x\in\mathbb{T}^{f_1}:\bigcup_{t=-q_n\log q_n}^{q_n\log q_n}T_t^{f_1}x\nsubseteq\left[-\frac{1}{q_n\log^3q_n},\frac{1}{q_n\log^3q_n}\right]^{f_1}\right\},\\
&S_n^2=\left\{x\in\mathbb{T}^{f_2}:\bigcup_{t=-q'_n\log q'_n}^{q'_n\log q'_n}T_t^{f_2}x\nsubseteq\left[-\frac{1}{q'_n\log^3q'_n},\frac{1}{q'_n\log^3q'_n}\right]^{f_2}\right\}.
\end{aligned}
\end{equation}

We have
\begin{equation}\label{smlmes}
\mu(S_n^1)\geq 1-\frac{c(f_1)}{\log^2q_n}\text{ and }\mu(S_n^2)\geq 1-\frac{c(f_2)}{\log^2q'_n},
\end{equation}
for some constant $c(f_i)>0$, $i=1,2$. For $n_1\in \N$, we define
\begin{equation}\label{si}
S^i(n_1):=\bigcap_{n\geq n_1}S_n^i,\;\;\; i=1,2.
\end{equation}
Notice that by \eqref{smlmes} for every $\delta>0$ there exists $n_1=n_1(\delta)\in\mathbb{N}$ such that $\mu(S^i(n_1))>1-\delta^3$ for $i=1,2$. We have the following lemma which is proved in the Appendix (recall that for $z\in \T^{f_i}$, $z_h$ denotes the first coordinate of $z$):

\begin{lm}\label{new.l} Fix $i=1,2$. There exists a constant $d_i=d_i(f_i,\alpha_i)>0$ such that for $z,z'\in S^i(n_1)$ satisfying $d_H(z,z')\leq d_i$ and $z_h<z_h'$ we have for every
$0<w<|\frac{d_H(z,z')^{-1}}{\log^{12} d_H(z,z')}|$
$$
0\notin [z_h+w\alpha_i,z_h'+w\alpha_i].
$$
\end{lm}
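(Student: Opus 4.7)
The plan is to pick an index $n\geq n_1$ tailored to $\delta := d_H(z,z')$ so that $z,z'\in S_n^i$ forces the $R_{\alpha_i}$-orbits of $z_h$ and $z_h'$ to avoid $[-\eta_n,\eta_n]$ for all iterations $1\leq j\leq q_n$, where $\eta_n := 1/(q_n\log^3 q_n)\geq\delta$, and so that $q_n\geq W := \delta^{-1}/\log^{12}(1/\delta)$. Once this is done, the conclusion is immediate: the arc $[z_h+w\alpha_i, z_h'+w\alpha_i]$ on $\T$ has length $\delta$ (rotation is an isometry), so containing $0$ would force $R_{\alpha_i}^w z_h\in[-\delta,0]$, contradicting the avoidance.

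To choose $n$, I would take it to be the largest integer with $q_n\log^3 q_n\leq 1/\delta$; for $d_i$ small this forces $n\geq n_1$. By maximality $1/\delta < q_{n+1}\log^3 q_{n+1}$, and the condition $\alpha_i\in\mathscr{D}$ gives $q_{n+1}\leq C(\alpha_i)\,q_n\log q_n(\log n)^2$; combining with $\log n\lesssim\log q_n$ (as $q_n$ grows at least geometrically) and $\log q_{n+1}\lesssim\log q_n$ yields $q_{n+1}\log^3 q_{n+1}\leq C_1\,q_n\log^6 q_n$. Hence
\begin{equation*}
q_n\geq\frac{1}{C_1\,\delta\,\log^6(1/\delta)}\geq W\cdot\frac{\log^6(1/\delta)}{C_1}\gg W
\end{equation*}
once $\delta$ is sufficiently small.

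The technical heart is converting the flow-time hypothesis into a statement about base iterations. I claim $z\in S_n^i$ implies $R_{\alpha_i}^j z_h\notin[-\eta_n,\eta_n]$ for all $1\leq j\leq q_n$. Suppose not and let $j_0$ be the smallest such $j$. For $0\leq j<j_0$ the iterate $R_{\alpha_i}^j z_h$ avoids $[-\eta_n,\eta_n]$, so $z^{j_0}_{\min}>\eta_n$ and the asymptotic $f(x)\asymp|x|^{\gamma}$ gives $f(z^{j_0}_{\min})\leq f(\eta_n)\asymp\eta_n^{-|\gamma|}=(q_n\log^3 q_n)^{|\gamma|}=o(q_n)$ (using $|\gamma|<1$). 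Lemma \ref{koks} applied with $M=j_0\leq q_n$ then yields $f^{(j_0)}(z_h)\leq f(z^{j_0}_{\min})+3q_n=O(q_n)$. On the other hand, the flow orbit of $z$ reaches horizontal coordinate $R_{\alpha_i}^{j_0} z_h\in[-\eta_n,\eta_n]$ at flow time $f^{(j_0)}(z_h)-z_v$; since $z\in S_n^i$ forbids the flow orbit from entering the strip for $|t|\leq q_n\log q_n$, and since $z_v<f(z_h)=o(q_n)$ (because $z_h\notin[-\eta_n,\eta_n]$), we must have $f^{(j_0)}(z_h)\geq q_n\log q_n/2$, contradicting $f^{(j_0)}(z_h)=O(q_n)$ once $\log q_n$ is large enough. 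Symmetric arguments handle negative iterates and $z'$, and combining with the preceding paragraph we obtain $R_{\alpha_i}^w z_h, R_{\alpha_i}^w z_h'\notin[-\delta,\delta]$ for every $0<w<W$.

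The main obstacle is precisely this conversion from flow-time avoidance to base-iteration avoidance: the polylogarithmic losses incurred in applying Lemma \ref{koks} together with the Diophantine inequality from $\mathscr{D}$ must be tracked carefully, and this is exactly where the generous $\log^{12}$ slack in the hypothesis is spent.
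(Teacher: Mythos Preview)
Your argument is correct and follows essentially the same strategy as the paper: use membership in $S^i_n$ for an appropriate scale $n$ to keep the orbit away from the singularity, and convert the flow-time avoidance into a statement about base iterates via the Denjoy--Koksma bound \eqref{DK1}. The only organizational difference is that you fix a single $n$ (the largest with $q_n\log^3 q_n\le 1/\delta$) and bound $f^{(j_0)}(z_h)$ directly, whereas the paper parametrizes by flow time $t$ and instead proves the equivalent lower bound $N(z,t)\ge t/\log^5 t$ (their \eqref{Nestimate}); these are inverse formulations of the same estimate, and your version is arguably more transparent since the choice of $n$ is explicit.
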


Let $P_i=100|\gamma_i|^{-1}$, $i=1,2$ and define
\begin{equation}
\begin{aligned}
&W_{t}^1=\left\{x\in \mathbb{T}^{f_1}:|f_1'^{(N(x,t))}(x_h)|\geq \frac{|N(x,t)|^{1+|\gamma_1|}}{\log^{P_1}|N(x,t)|}\right\}, \\
&W_{t}^2=\left\{y\in \mathbb{T}^{f_2}:|f_2'^{(M(y,t))}(y_h)|\geq \frac{|M(y,t)|^{1+|\gamma_2|}}{\log^{P_2}|M(y,t)|}\right\}.\\
\end{aligned}
\end{equation}
where $N(x,t)$ and $M(y,t)$ are defined in \eqref{specfl} for respectively $\cT^{f_1}$ and $\cT^{f_2}$.
We will use the following proposition (see Proposition 6.8 in \cite{Kanigowski}).
\begin{pr}\label{SetPro} Fix $\delta>0$.
There exists sets $W^1\subset\mathbb{T}^{f_1}$, $W^2\subset\mathbb{T}^{f_2}$, $\mu(W^i)\geq1-\delta^{10}$ for $i=1,2$ and $n_2=n_2(\delta)\in\mathbb{N}$ such that for every $z\in W^i$ and $T\geq n_2$ we have

\begin{equation}\label{wti}
\lambda(\{t\in[-T,T]:z\in W_{t}^i\})\geq 2T(1-\log^{-3}T),
\end{equation}

for $i=1,2$.
\end{pr}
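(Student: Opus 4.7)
The plan is to prove Proposition \ref{SetPro} for $i=1$; the case $i=2$ is identical. Write $f=f_1$, $\alpha=\alpha_1$, $\gamma=\gamma_1$, $P=P_1=100|\gamma|^{-1}$, and $(q_n)=(q_{\alpha_1,n})$. The overall strategy is to control the base-level bad set
$$\mathcal{B}_M:=\bigl\{x\in\T:|f'^{(M)}(x)|<|M|^{1+|\gamma|}/\log^P|M|\bigr\},\qquad M\in\Z\setminus\{0\},$$
then transfer to flow time via Fubini plus Hölder, and finally extract $W^1$ by Markov together with a dyadic Borel--Cantelli sweep over $T$.

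For $|M|\in[q_s,q_{s+1})$, Lemma \ref{koks} gives $|f'^{(M)}(x)|\geq f'(x_{\min}^M)-8|\gamma|q_s^{1+|\gamma|}$, and the asymptotic $|f'(y)|\sim|\gamma|y^{-(1+|\gamma|)}$ from \eqref{Kor2} translates into the implication: if $x_{\min}^M\leq c_0/q_s$ for a small constant $c_0=c_0(\gamma)>0$, then $x\notin\mathcal{B}_M$. The Diophantine condition $\alpha\in\mathscr{D}$, i.e.\ $q_{s+1}\leq C q_s\log q_s(\log s)^2$, makes $|M|^{1+|\gamma|}/\log^P|M|$ much smaller than $q_s^{1+|\gamma|}$ thanks to the huge exponent $P=100|\gamma|^{-1}$, so the subtracted Denjoy--Koksma error is the term that governs the threshold. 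Thus $\mathcal{B}_M\subseteq\{x:x_{\min}^M>c_0/q_s\}$, i.e.\ the complement of $U_M:=\bigcup_{0\leq j<|M|}(-j\alpha-c_0/q_s,-j\alpha+c_0/q_s)$. Iterating the three-distance theorem through scales $q_{s+k}$ with $k\lesssim\log\log|M|$ chosen so that $c_0q_{s+k}/q_s$ exceeds the three-distance arc length at level $s+k$ — possible precisely because $\alpha\in\mathscr{D}$ forces $q_{n+1}/q_n$ to be sub-polylogarithmic — yields
$$\lambda(\mathcal{B}_M)\leq C/\log^{P'}|M|$$
for some large exponent $P'=P'(\gamma,P)$ that grows linearly with $P$.

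To convert to flow time, fix $z\in\T^{f_1}$: since $z\notin W^1_t$ iff $z_h\in\mathcal{B}_{N(z,t)}$ and the Lebesgue measure of the $t$-interval on which $N(z,t)=M$ equals $f(R_\alpha^M z_h)$, setting $\mathcal{B}(z):=\{M:z_h\in\mathcal{B}_M\}$ one obtains
$$\lambda\bigl(\{t\in[-T,T]:z\notin W^1_t\}\bigr)\leq\sum_{M\in\mathcal{B}(z)\cap[-2T,2T]}f(R_\alpha^M z_h)\leq|\mathcal{B}(z)\cap[-2T,2T]|^{1/p'}\Bigl(\sum_{|M|\leq 2T}f^p(R_\alpha^M z_h)\Bigr)^{1/p}$$
for any $p\in(1,1/|\gamma|)$, by Hölder's inequality. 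The inner sum is $\leq CT$ on a set of measure $\geq 1-\delta^{10}/2$ by Denjoy--Koksma type control for $f^p\in L^1(\T)$, with near-singularity excursions ruled out as in the $S^1_n$ sets of \eqref{sin}. Meanwhile,
$$\int_{\T^{f_1}}|\mathcal{B}(z)\cap[-2T,2T]|\,d\mu(z)\leq\sum_{|M|\leq 2T}\int_{\mathcal{B}_M}f\,d\lambda\leq CT/\log^{P'/p'}T,$$
using Hölder on $\int_{\mathcal{B}_M}f$ and the geometric bound above. Markov, combined with a dyadic Borel--Cantelli over $T=2^n$, then produces $W^1$ with $\mu(W^1)\geq 1-\delta^{10}$ on which $|\mathcal{B}(z)\cap[-2T,2T]|\leq T/\log^{3p'}T$ for every $T\geq n_2=n_2(\delta)$; substituting back gives $\lambda(\{t\in[-T,T]:z\notin W^1_t\})\leq 2T/\log^3T$, which is the desired bound.

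The main obstacle is the geometric estimate $\lambda(\mathcal{B}_M)\leq C/\log^{P'}|M|$: a single-scale three-distance at $q_s$ only controls a constant-sized subset, so one must iterate through several $q_{s+k}$, which is the step where the sub-polylogarithmic growth of $q_{n+1}/q_n$ under $\alpha\in\mathscr{D}$ becomes essential. The huge exponent $P=100|\gamma|^{-1}$ in the definition of $W^i_t$ is chosen precisely so that, after the Hölder loss $1/p'$ and the Markov $\log^{3p'}$ blow-up, enough logarithmic decay remains to produce the required $\log^3$ rate at the end.
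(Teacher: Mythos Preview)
First, a framing remark: the paper does not actually prove Proposition~\ref{SetPro}. It is quoted verbatim from Proposition~6.8 of \cite{Kanigowski} (``see Proposition 6.8 in \cite{Kanigowski}'') and used as a black box. So there is no ``paper's own proof'' to compare against; your attempt is an independent one.

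That said, your argument has a genuine gap at its central step, the claimed bound $\lambda(\mathcal{B}_M)\leq C/\log^{P'}|M|$. The Denjoy--Koksma lower bound \eqref{DK2} only tells you that if $x_{\min}^M\leq c_0/q_s$ for a \emph{fixed} constant $c_0=c_0(\gamma)$ (coming from comparing $|f'(c_0/q_s)|\sim|\gamma|c_0^{-(1+|\gamma|)}q_s^{1+|\gamma|}$ with the error $8|\gamma|q_s^{1+|\gamma|}$), then $x\notin\mathcal{B}_M$. Hence $\mathcal{B}_M\subseteq\{x:x_{\min}^M>c_0/q_s\}$. But the measure of the latter set does \emph{not} decay. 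Take $M=q_s$: the orbit points $\{-j\alpha:0\leq j<q_s\}$ are at least $\|q_s\alpha\|>1/(2q_{s+1})$-separated, so for $c_0<1/4$ the intervals $(-j\alpha-c_0/q_s,-j\alpha+c_0/q_s)$ are pairwise disjoint and their union has measure exactly $2c_0$. Thus $\lambda(\{x:x_{\min}^{q_s}>c_0/q_s\})=1-2c_0$, a fixed positive constant independent of $s$. Your ``iteration through scales $q_{s+k}$'' cannot repair this: since $|M|<q_{s+1}$, the orbit of length $M$ contains no information at scales $q_{s+k}$ for $k\geq1$, and the three-distance theorem at level $s+k$ concerns orbits of length $q_{s+k}>M$, which you do not have. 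So the inclusion $\mathcal{B}_M\subseteq\{x:x_{\min}^M>c_0/q_s\}$ is simply too crude to yield any decay.

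Because the rest of your scheme (H\"older, Fubini, Markov, dyadic Borel--Cantelli) feeds directly off this decay, the whole argument collapses: with $\lambda(\mathcal{B}_M)$ bounded below along the subsequence $M=q_s$, the integral $\sum_{|M|\leq 2T}\int_{\mathcal{B}_M}f\,d\lambda$ is of order $T$, not $T/\log^{P'/p'}T$. In fact a Fubini argument shows that if $\lambda(\mathcal{B}_M)\geq c>0$ for infinitely many $M$, then for a positive-measure set of $x$ one has $|\{M:|M|\leq N,\ x\in\mathcal{B}_M\}|\gtrsim N/\log N$ (the bad $M$'s cluster near each $q_s$, and the $q_s$ are only polylog-separated under $\alpha\in\mathscr{D}$), which already exceeds the target $N/\log^3N$. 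The actual proof in \cite{Kanigowski} does not try to bound $\lambda(\mathcal{B}_M)$ pointwise in $M$ via the DK lower bound; a finer analysis of the Birkhoff sums $f'^{(M)}$---tracking how the dominant contributions from the left and right closest approaches to $0$ interact as $M$ varies---is what is needed, and this is precisely what your sketch does not supply.
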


The following lemma describes the behaviour of ergodic sums. The proof is carried over to the Appendix.
\begin{lm}\label{ConDer}For every $\epsilon_2>0$ there  exists $n'\in \N$ and $\delta_0>0 $such that for every $\delta_0>\delta>0$, $x\in S^1(n_1)\cap W^{1}$,  $y\in S^2(n_1)\cap W^2$ and $T\geq n'$ there exists a set $G_T\subset [0,T]$, $\lambda(G_T)\geq T(1-4\log^{-3}T)$ such that for every $t\in G_T$, we have
\begin{equation}\label{MainRelation}
\begin{aligned}
&t^{1+|\gamma_1|-\epsilon_2}\leq
|f_1'^{(N(x,t))}(\theta_h)| \leq t^{1+|\gamma_1|+\epsilon_2},\text{ for every } d_H(\theta_h,x_h)\leq (Tlog^{2P_1}T)^{-1},\\
&t^{1+|\gamma_2|-\epsilon_2}\leq|f_2'^{(M(y,t))}(\xi_h)|\leq t^{1+|\gamma_2|+\epsilon_2},\text{ for every } d_H(\xi_h,y_h)\leq (Tlog^{2P_2}T)^{-1}.
\end{aligned}
\end{equation}
\end{lm}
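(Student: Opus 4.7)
The plan is to establish the two-sided bound $t^{1+|\gamma_1|-\epsilon_2}\le|f_1'^{(N(x,t))}(\theta_h)|\le t^{1+|\gamma_1|+\epsilon_2}$ (and its $f_2$-analogue) by combining the three Denjoy-Koksma inequalities of Lemma \ref{koks} with the hypotheses $x\in S^1(n_1)\cap W^1$, $y\in S^2(n_1)\cap W^2$ and $\alpha_1,\alpha_2\in\mathscr{D}$. Throughout, the key comparison $N(x,t)\asymp t$ up to a polylog factor is obtained from DK1: since $x\in S^1(n_1)$, the closest return of $x_h$ to $0$ in $N(x,t)$ steps is at least $1/(q_s\log^3q_s)$, so $f_1(x_{h,\min}^{N(x,t)})\lesssim(q_s\log^3q_s)^{|\gamma_1|}\ll q_s$ and both bounds in DK1 are of order $q_s$ up to a polylog factor. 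The set $G_T$ will be $[0,T]$ minus the set of $t$ where either $x\notin W_t^1$ or $y\notin W_t^2$, which by Proposition \ref{SetPro} has measure at most $4T\log^{-3}T$.

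Upper bound. Fix $t\in[0,T]$ and let $s$ satisfy $N(x,t)\in[q_s,q_{s+1})$. Lemma \ref{new.l} propagates the lower bound on $x_{h,\min}^{N(x,t)}$ to any $\theta_h$ with $d_H(\theta_h,x_h)\le(T\log^{2P_1}T)^{-1}$, yielding $\theta_{h,\min}^{N(x,t)}\gtrsim 1/(q_s\log^3q_s)$. The DK2 estimate then gives
$$|f_1'^{(N(x,t))}(\theta_h)|\le f_1'(\theta_{h,\min}^{N(x,t)})+8|\gamma_1|q_{s+1}^{1+|\gamma_1|}.$$
Using $|f_1'(u)|\asymp u^{-(1+|\gamma_1|)}$ near $0$, the first term is $\lesssim(q_s\log^3q_s)^{1+|\gamma_1|}$; the hypothesis $\alpha_1\in\mathscr{D}$ yields $q_{s+1}\le Cq_s\log q_s(\log s)^2$, so the second term is also $\lesssim q_s^{1+|\gamma_1|}\cdot\mathrm{polylog}(q_s)$. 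Combined with $q_s\asymp t$ up to polylog, the right-hand side is at most $t^{1+|\gamma_1|+\epsilon_2}$ once $T$ is large enough in terms of $\epsilon_2$.

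Lower bound. For $t\in G_T$ we have $x\in W_t^1$, which directly gives $|f_1'^{(N(x,t))}(x_h)|\ge N(x,t)^{1+|\gamma_1|}/\log^{P_1}N(x,t)\ge t^{1+|\gamma_1|-\epsilon_2/2}$ after absorbing polylog factors. To transfer this estimate to $\theta_h$, apply DK3 to $|f_1''^{(N(x,t))}(z)|$ on the segment $[x_h,\theta_h]$: the same near-singularity and Diophantine inputs give $|f_1''^{(N(x,t))}(z)|\lesssim t^{2+|\gamma_1|}\mathrm{polylog}(t)$. Multiplying by $|\theta_h-x_h|\le(T\log^{2P_1}T)^{-1}$ produces an oscillation of size at most $t^{1+|\gamma_1|}/\log^{2P_1-O(1)}t$, which is strictly dominated by $t^{1+|\gamma_1|-\epsilon_2/2}$ once $P_1=100|\gamma_1|^{-1}$ overcomes the accumulated polylog losses. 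Hence $|f_1'^{(N(x,t))}(\theta_h)|\ge t^{1+|\gamma_1|-\epsilon_2}$. The analogous argument for $y$ and $f_2$ runs identically with $(\alpha_2,\gamma_2,P_2,q'_n)$ in place of $(\alpha_1,\gamma_1,P_1,q_n)$.

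The main obstacle is the lower-bound transfer step: one must simultaneously ensure that (a) $\theta_h$'s rotation orbit avoids the singularity for the full $N(x,t)$ iterations, which is exactly the role of Lemma \ref{new.l}, and (b) the oscillation of $f_1'^{(N(x,t))}$ over the $(T\log^{2P_1}T)^{-1}$-neighborhood of $x_h$ is swallowed by the $\log^{P_1}N(x,t)$ gain present in the definition of $W_t^1$. Both are delicate because every appeal to Denjoy-Koksma costs a $q_{s+1}^{*}$ error term which is only controlled by the Diophantine condition $\alpha_i\in\mathscr{D}$; this is precisely why $P_i=100|\gamma_i|^{-1}$ is chosen so large, and why the argument is restricted to $\mathscr{D}$, where $q_{s+1}$ stays within a polylog factor of $q_s$. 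The remainder of the proof is a careful bookkeeping of polylog exponents against the tolerance $\epsilon_2$.
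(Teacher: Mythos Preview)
Your proposal is correct and follows essentially the same approach as the paper's proof: upper bound via DK2 together with the closest-return estimate coming from $S^i(n_1)$ (and the Diophantine condition for the $q_{s+1}$ error term), lower bound at $x_h$ via the definition of $W_t^i$, transfer to $\theta_h$ via DK3 applied on the segment $[x_h,\theta_h]$ with Lemma~\ref{new.l} guaranteeing the orbit avoids the singularity, and the comparison $N(x,t)\asymp t$ up to polylog obtained from DK1 and $x\in S^i(n_1)$. The definition of $G_T$ and the bookkeeping of polylog losses against $\epsilon_2$ and $P_i$ also match.
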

In order to simplify the proof, we suppose that $|\gamma_1|>|\gamma_2|$.
\begin{df}\label{eps0} Let $\epsilon_0>0$ and $\epsilon_2>0$ be such that there exists $R_0=R_0(\gamma_1,\gamma_2)$ such that for every $R\geq R_0$, we have
\begin{equation}\label{rweps0}
\frac{R^{(\frac{1}{1+|\gamma_2|}+\frac{1}{2}\epsilon_0)(1+|\gamma_2|
-\epsilon_2)-1}-\frac{1}{2}}
{R^{(\frac{1}{1+|\gamma_2|}+\epsilon_0)(1+|\gamma_1|+\epsilon_2)}}>
\frac{R^{(1-\epsilon_0)(1+|\gamma_2|+\epsilon_2)-1}+\frac{1}{2}}
{R^{(1-2\epsilon_0)(1+|\gamma_1|-\epsilon_2)}},
\end{equation}
and
$(1-2\epsilon_0)(1+|\gamma_1|-\epsilon_2)>1+|\gamma_2|+\epsilon_2$.
\end{df}
One can show that $\epsilon_0=
\frac{2\epsilon_2}{|\gamma_2|(1+|\gamma_2|)}$ and $\epsilon_2>0$ small enough (smallness depending only on $\gamma_1$ and $\gamma_2$!) satisfies Definition \ref{eps0}. From now on parameters $\epsilon_0,\epsilon_2>0$ are fixed throughout the paper.

\section{Proof of Theorem \ref{mainth}}\label{proofthmmain}
We will use the following proposition to prove Theorem \ref{mainth}. Assume $\gamma_i,\alpha_i$, $i=1,2$ are as in Theorem \ref{mainth}. Recall that the sequence $\cP_m$ is defined in Section \ref{part} and we use notation from Section \ref{not}.



Let $\epsilon_0>0$ be as in Definition \ref{eps0}.
\begin{pr}\label{PropClose}
For every $\delta>0$ there exists a set $A=A_{\delta}\subset\mathbb{T}^{f_1}\times\mathbb{T}^{f_2}$, $\mu^{f_1}\times \mu^{f_2}(A)>1-\delta$ and $m_{\delta},R_{\delta}\in\mathbb{N}$ such that for every $(x,y),(x',y')\in A$, $m\geq m_{\delta}$, $R\geq R_{\delta}$ and every $(\cP_m,\frac{1}{100})$-good matching $(i_s,j_s)_{s=0}^R$ of $(x,y)$ and $(x',y')$ there exists $(i_{r_0},j_{r_0})\in [0,R]^2\cap \Z$ such that
\begin{equation}
L_H(r_0)\leq R^{-\frac{1}{1-\epsilon_0}}.
\end{equation}
\end{pr}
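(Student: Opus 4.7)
The plan is to argue by contradiction via an exponential-scale decomposition of the matched indices, exploiting the distinct growth rates of $f_1'^{(N)}$ and $f_2'^{(N)}$ afforded by Lemma \ref{ConDer}.

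First, I would choose $A$ so that both $(x,y)$ and $(x',y')$ lie in a set where Lemma \ref{ConDer} applies, taking
\[
A := \left((S^1(n_1) \cap W^1)^{f_1}\right) \times \left((S^2(n_1) \cap W^2)^{f_2}\right)
\]
intersected with an auxiliary subset ensuring both base points are bounded away from the top of the roof. By Proposition \ref{SetPro} and \eqref{smlmes}, for $n_1 = n_1(\delta)$ sufficiently large, $\mu^{f_1} \times \mu^{f_2}(A) > 1 - \delta$. Suppose, for contradiction, that $(x,y),(x',y') \in A$ admit a $(\cP_m, 1/100)$-good matching $(i_s, j_s)_{s=0}^R$ with $L_H(r) > R^{-1/(1-\epsilon_0)}$ for every matched $r$. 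The matching forces $(T^{f_1} \times T^{f_2})_{i_r}(x,y)$ and $(T^{f_1} \times T^{f_2})_{j_r}(x',y')$ into the same atom of $\cP_m$, so for matched indices whose trajectory avoids the cusp atom (a subset of density $\geq 0.98$ by construction of $A$ and by goodness), we also have $L(r) \leq 2/m$.

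Next, I would partition these matched indices by exponential scale,
\[
A_j^{R, m} := \left\{r : \max(i_r, j_r) \in \bigl(R^{1 - (j+1)\epsilon_0},\, R^{1 - j\epsilon_0}\bigr]\right\}, \qquad 0 \leq j \leq J \sim 1/\epsilon_0,
\]
so that $\sum_j |A_j^{R, m}| \geq 0.98 R$. For $r \in A_j^{R, m}$, expanding the vertical component
\[
d_V(x_r, x'_r) \approx \bigl| f_1^{(N(x, i_r))}(x_h) - f_1^{(N(x', j_r))}(x'_h) - (i_r - j_r) \bigr|
\]
via a first-order Taylor argument and inserting Lemma \ref{ConDer}'s estimate $|f_1'^{(N)}(\theta_h)| \asymp N^{1 + |\gamma_1| \pm \epsilon_2}$ produces a tight linear constraint on the triple $(d_H(x,x'),\, i_r - j_r,\, \max(i_r, j_r)^{1 + |\gamma_1|})$. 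The analogous constraint on the second coordinate has exponent $1 + |\gamma_2|$; since $|\gamma_1| \neq |\gamma_2|$, the two constraints are essentially linearly independent. Proposition \ref{PropEst} packages this rigidity into the thin-set bound
\[
|A_j^{R, m}| \leq R^{(1 - j\epsilon_0)(1-\epsilon_0) + o(1)}.
\]

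Summing over scales yields $\sum_j |A_j^{R, m}| \lesssim J \cdot R^{1 - \epsilon_0} = o(R)$, contradicting the lower bound $0.98 R$ for $R \geq R_\delta$ large. Hence there must exist a matched $r_0$ with $L_H(r_0) \leq R^{-1/(1-\epsilon_0)}$. The main obstacle is Proposition \ref{PropEst}: one must control the Taylor remainder using \eqref{DK3}, verify that the horizontal perturbation $d_H(\theta_h, x_h)$ remains within the admissible range $(T \log^{2P_i} T)^{-1}$ where Lemma \ref{ConDer} applies, and---most delicately---extract the thin-set bound from the two exponent-asymmetric linear constraints. The hypothesis $\gamma_1 \neq \gamma_2$ is essential here: when the exponents coincide, the two constraints become proportional and the thin-set structure fails, which explains its role in the statement (see Remark \ref{2.4}).
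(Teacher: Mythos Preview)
Your overall architecture---contradiction via an exponential-scale decomposition of the matched indices, with a thin-set bound on each scale coming from the derivative asymmetry $|\gamma_1|\neq|\gamma_2|$---matches the paper. But you have chosen the wrong variable to decompose in, and this breaks the argument at the very first scale.

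You partition by the \emph{time} $\max(i_r,j_r)$, setting
\[
A_j^{R,m}=\bigl\{r:\max(i_r,j_r)\in(R^{1-(j+1)\epsilon_0},R^{1-j\epsilon_0}]\bigr\},
\]
and then claim $|A_j^{R,m}|\le R^{(1-j\epsilon_0)(1-\epsilon_0)+o(1)}$. For $j=0$ this would say $|A_0^{R,m}|\le R^{1-\epsilon_0+o(1)}$. But $(i_s)$ and $(j_s)$ are strictly increasing in $[0,R]$, so at most $R^{1-\epsilon_0}$ matched indices can have $\max(i_r,j_r)\le R^{1-\epsilon_0}$; hence a $(1/100)$-good matching forces $|A_0^{R,m}|\ge 0.99R-R^{1-\epsilon_0}$, which is of order $R$, not $R^{1-\epsilon_0}$. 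The bound you attribute to Proposition~\ref{PropEst} is therefore impossible as stated, and the summation $\sum_j|A_j^{R,m}|=o(R)$ cannot go through. In fact Proposition~\ref{PropEst} in the paper asserts something quite different; it does not bound your time-windows at all.

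The paper's decomposition is by the \emph{horizontal distance} $L_H(r)$, not by time:
\[
A_j^{R,m}=\bigl\{r:L(r)<2m^{-1}\text{ and }2^{-j-1}<L_H(r)\le 2^{-j}\bigr\}.
\]
Proposition~\ref{PropEst} then gives $|A_j^{R,m}\cap\{r:(i_r,j_r)\in U_R\times U_R\}|\le R/j^{1.4}$ for every $j$ with $2^j\le R^{1/(1-\epsilon_0)}$. The point is that at a fixed horizontal scale $2^{-j}$, the Birkhoff-sum increments $f_1'^{(N)}\cdot d_H(x,x')$ and $f_2'^{(M)}\cdot d_H(y,y')$ grow at the distinct rates $t^{1+|\gamma_1|}$ and $t^{1+|\gamma_2|}$, so the two vertical constraints can only be compatible for a short stretch of time; covering $[0,R]$ by such stretches yields the $R/j^{1.4}$ bound (this is Lemma~\ref{EstLemma}). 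Summing $\sum_{j\ge\frac12\log m}j^{-1.4}\ll 1$ shows that the scales $j<j_R$ cannot carry the whole matching, forcing some $r_0$ with $L_H(r_0)\le 2^{-j_R}\le R^{-1/(1-\epsilon_0)}$. Your Taylor/derivative heuristic is exactly the mechanism behind the per-scale bound, but it has to be deployed at fixed $L_H$-scale, not fixed time-scale.
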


Before we prove Proposition \ref{PropClose}, let us show how it implies Theorem \ref{mainth}.
\begin{proof}[Proof of Theorem \ref{mainth}]
Fix $\delta=1/10$ and let $m=m_\delta$, $R\geq R_\delta$ and $A=A_\delta$ be as in Proposition \ref{PropClose}. To finish the proof we will show that for every $x,y\in A$, we have
$$
\mu^{f_1}\times\mu^{f_2}(A\cap B_{\bar{f}_R^{P_m}}((x,y),1/100))
< 1/2.
$$
 Let $C_1:=(\min_\T f_1)^{-1}$. The above will follow by showing that for every $(x,y),(x',y')\in A\times \T^{f_2}$ satisfying $\bar{f}_R^{P_m}((x,y),(x',y'))<\frac{1}{100}$, we have
\begin{equation}\label{imp}
x'_h\in\bigcup^{2C_1R}_{i=-2C_1R}\left[x_h+
i\alpha_1-R^{-\frac{1}{1-\epsilon_0}},
x_h+i\alpha_1+R^{-\frac{1}{1-\epsilon_0}}\right]:=S(x).
\end{equation}
(recall  that $x_h,x'_h\in\T$ denote first coordinates of $x,x'$). Indeed, notice that $\lambda(S(x))\leq 4C_1RR^{-\frac{1}{1-\epsilon_0}}\ll 1/2$ and by \eqref{imp}, we have
$$
A\cap B_{\bar{f}_R^{P_m}}((x,y),1/100)\subset \{x': x'_h\in S(x)\}\times \T^{f_2}.
$$
So it is enough to show \eqref{imp}. By definition, there exists a $(\cP_m,\frac{1}{100})$-good matching $(i_s,j_s)_{s=0}^R$ of $(x,y)$ and $(x',y')$. By Proposition \ref{PropClose} we know there exist $(i_{r_0},j_{r_0})\in[0,R]^2\cap \Z$ 
such that $L_H(r_0)\leq R^{-\frac{1}{1-\epsilon_0}}$. In particular it follows that
\begin{equation}\label{hora}d_H(x_{r_0},x'_{r_0})\leq R^{-\frac{1}{1-\epsilon_0}}.
\end{equation}
Let $r_1,r_2\geq 0$ be such that the $x_h+r_1\alpha_1$ and $x'_h+r_2\alpha_1$ are the first coordinates of $x_{r_0}=$ and $x'_{r_0}$. By the definition of special flow it follows that $C_1R\geq r_1,r_2$. Then \eqref{hora} becomes
\begin{equation}\label{localInequality}
\|x_h-x'_h-(r_2-r_1)\alpha_1\|\leq R^{-\frac{1}{1-\epsilon_0}}.
\end{equation}
This finishes the proof of \eqref{imp} since $|r_2-r_1|\leq r_1+r_2\leq 2RC_1$.

\end{proof}

\section{Proof of Proposition \ref{PropClose}}\label{proofpropropclose}

For $m>0$ and $R,j\in\mathbb{N}$ $(x,y)$ and $(x',y')$ we fix a $(\cP_m,\frac{1}{100})$-good matching $(i_s,j_s)_{s=0}^{R}$ of $(x,y)$ and $(x',y')$. Define (see Section \ref{not})
\begin{equation}
A_j^{R,m}((x,y),(x',y'))=\{r\in[0,R]:L(r)<2m^{-1} and \ \ 2^{-j-1}<L_H(r)\leq2^{-j}\}
\end{equation}

Let us explain the meaning of the sets $A_j^{R,m}$. In  Figure 1 we have a matching between $(x,y)$ and $(x',y')$. To each such arrow we prescribe a parameter which measures how large is the horizontal distance between the first coordinates on both endpoints of the arrow (see Figure 2). In Figure 2 different colors correspond to different $j$'s in sets $A^{R,m}_j$.


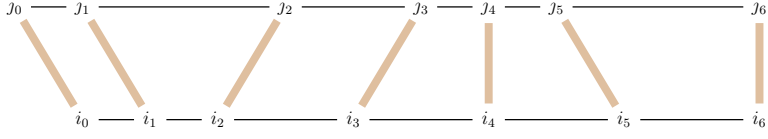
\begin{figure}[h]
 \centering
 \scalebox{0.6}
 {
 \begin{tikzpicture}[scale=5]
	 \tikzstyle{vertex}=[circle,minimum size=20pt,inner sep=0pt]
	 \tikzstyle{selected vertex} = [vertex, fill=red!24]
	 \tikzstyle{edge1} = [draw,line width=5pt,-,red!50]
     \tikzstyle{edge2} = [draw,line width=5pt,-,green!50]
     \tikzstyle{edge3} = [draw,line width=5pt,-,blue!50]
     \tikzstyle{edge4} = [draw,line width=5pt,-,brown!50]

	 \tikzstyle{edge} = [draw,thick,-,black]
	 \node[vertex] (v01) at (0.3,0) {$i_0$};
	 \node[vertex] (v02) at (0.6,0) {$i_1$};
	 \node[vertex] (v03) at (0.9,0) {$i_2$};
	 \node[vertex] (v04) at (1.5,0) {$i_3$};
	 \node[vertex] (v05) at (2.1,0) {$i_4$};
	 \node[vertex] (v06) at (2.7,0) {$i_5$};
	 \node[vertex] (v07) at (3.3,0) {$i_6$};
     \node[vertex] (v10) at (0,0.5) {$j_0$};
	 \node[vertex] (v11) at (0.3,0.5) {$j_1$};
	 \node[vertex] (v12) at (1.2,0.5) {$j_2$};
	 \node[vertex] (v13) at (1.8,0.5) {$j_3$};
	 \node[vertex] (v14) at (2.1,0.5) {$j_4$};
	 \node[vertex] (v15) at (2.4,0.5) {$j_5$};
	 \node[vertex] (v16) at (3.3,0.5) {$j_6$};
	
	 \draw[edge] (v01)--(v02)--(v03)--(v04)--(v05)--(v06)--(v07);
	 \draw[edge] (v10)--(v11)--(v12)--(v13)--(v14)--(v15)--(v16);
     \draw[edge4] (v10)--(v01);
     \draw[edge4] (v11)--(v02);
     \draw[edge4] (v12)--(v03);
     \draw[edge4] (v13)--(v04);
     \draw[edge4] (v14)--(v05);
     \draw[edge4] (v15)--(v06);
     \draw[edge4] (v16)--(v07);

 \end{tikzpicture}
 }
 \caption{Original Matching}
 \end{figure}


 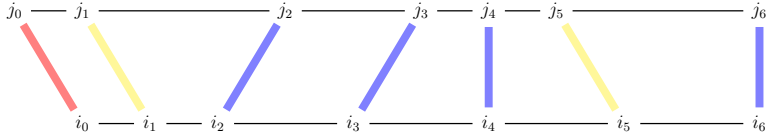
\begin{figure}[h]\label{fig2}
 \centering
 \scalebox{0.6}
 {
 \begin{tikzpicture}[scale=5]
	 \tikzstyle{vertex}=[circle,minimum size=20pt,inner sep=0pt]
	 \tikzstyle{selected vertex} = [vertex, fill=red!24]
	 \tikzstyle{edge1} = [draw,line width=5pt,-,red!50]
     \tikzstyle{edge2} = [draw,line width=5pt,-,yellow!50]
     \tikzstyle{edge3} = [draw,line width=5pt,-,blue!50]

	 \tikzstyle{edge} = [draw,thick,-,black]
	 	 \node[vertex] (v01) at (0.3,0) {$i_0$};
	 \node[vertex] (v02) at (0.6,0) {$i_1$};
	 \node[vertex] (v03) at (0.9,0) {$i_2$};
	 \node[vertex] (v04) at (1.5,0) {$i_3$};
	 \node[vertex] (v05) at (2.1,0) {$i_4$};
	 \node[vertex] (v06) at (2.7,0) {$i_5$};
	 \node[vertex] (v07) at (3.3,0) {$i_6$};
     \node[vertex] (v10) at (0,0.5) {$j_0$};
	 \node[vertex] (v11) at (0.3,0.5) {$j_1$};
	 \node[vertex] (v12) at (1.2,0.5) {$j_2$};
	 \node[vertex] (v13) at (1.8,0.5) {$j_3$};
	 \node[vertex] (v14) at (2.1,0.5) {$j_4$};
	 \node[vertex] (v15) at (2.4,0.5) {$j_5$};
	 \node[vertex] (v16) at (3.3,0.5) {$j_6$};
	
	 \draw[edge] (v01)--(v02)--(v03)--(v04)--(v05)--(v06)--(v07);
	 \draw[edge] (v10)--(v11)--(v12)--(v13)--(v14)--(v15)--(v16);
     \draw[edge1] (v10)--(v01);
     \draw[edge2] (v11)--(v02);
     \draw[edge3] (v12)--(v03);
     \draw[edge3] (v13)--(v04);
     \draw[edge3] (v14)--(v05);
     \draw[edge2] (v15)--(v06);
     \draw[edge3] (v16)--(v07);

 \end{tikzpicture}
 }
 \caption{Partition of Matching Arrows based on set $A_j^{R,m}$}
 \end{figure}

We will use the following proposition to show Proposition \ref{PropClose}.
\begin{pr}\label{PropEst} For every $\delta>0$ there exists $R_{\delta},m_{\delta}>0$ and a set $B=B_{\delta}\subset\mathbb{T}^{f_1}\times\mathbb{T}^{f_2}$, $\mu^{f_1}\times\mu^{f_2}(B)>1-\delta$, such that for every $m\geq m_{\delta}$, $R\geq R_{\delta}$ and every $(x,y),(x',y')\in B$ there exists $U_R(x,y),U_R(x',y')\subset[0,R]$ such that
\begin{itemize}
\item[\textbf{(a)}] $|U_R(x,y)|,|U_R(x',y')|\geq\frac{99}{100}R$,
\item[\textbf{(b)}] for every $(p,q)\in U_R(x,y)\times U_R(x',y')$, we have $(T^{f_1}\times T^{f_2})_p(x,y),(T^{f_1}\times T^{f_2})_q(x',y')\in K_m^{1}\times K_m^{2}$ (see Section \ref{part}),
\item[\textbf{(c)}] for every $j\in\mathbb{N}$ such that $2^j\leq R^{\frac{1}{1-\epsilon_0}}$, and any $(\cP_m,\frac{1}{100})$-good matching $(i_r,j_r)_{r=0}^R$ of $(x,y)$ and $(x',y')$, we have
    \begin{equation}
    \begin{aligned}
&\left|A^{R,m}_j((x,y),(x',y'))\cap\{r\in [0,R]:(i_r,j_r)\in U_R(x,y)\times U_R(x',y')\}\right|\\&\leq\frac{R}{j^{1.4}}.
    \end{aligned}
    \end{equation}
\end{itemize}
\end{pr}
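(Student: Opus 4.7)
\textbf{Proof plan for Proposition \ref{PropEst}.}
The set $B$ will be assembled from the ``good'' sets already isolated in Section \ref{not}. Concretely, put
\[
B_{\delta} := \bigl((S^1(n_1)\cap W^1)\times(S^2(n_1)\cap W^2)\bigr)\cap B_{\mathrm{erg}},
\]
where $B_{\mathrm{erg}}$ is a Birkhoff-type set along the time-one map $T^{f_1}_1\times T^{f_2}_1$ on which the $\cP_m$-orbit spends density $\geq 1-10^{-3}$ of its steps inside $K_m^1\times K_m^2$. For $n_1=n_1(\delta)$, $n_2=n_2(\delta)$ and $m$ large enough, Proposition \ref{SetPro}, estimate \eqref{smlmes} and the ergodic theorem give $\mu^{f_1}\times\mu^{f_2}(B_{\delta})>1-\delta$. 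For $(x,y)\in B_{\delta}$, define $U_R(x,y)$ to be the set of $p\in[0,R]\cap\Z$ such that $(T^{f_1}\times T^{f_2})_p(x,y)\in K_m^1\times K_m^2$ and both factor orbits at time $p$ fall inside the sets where the derivative estimates \eqref{MainRelation} of Lemma \ref{ConDer} hold on a time window of length $R$ centered at $p$. Item (b) holds by construction, and (a) follows from Birkhoff for the $K_m^i$-condition together with \eqref{wti} applied along the shifted orbits.

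The heart of the argument is (c). Fix $j$ with $2^j\leq R^{1/(1-\epsilon_0)}$ and a $(\cP_m,\tfrac{1}{100})$-good matching $(i_s,j_s)_{s=0}^R$. Put
\[
E_j := A_j^{R,m}\cap\bigl\{r\in[0,R]:(i_r,j_r)\in U_R(x,y)\times U_R(x',y')\bigr\}.
\]
The plan is to cover the $E_j$-indices by matching balls $B((i_w,j_w),U_j)$ of radius $U_j\asymp 2^{j(\frac{1}{1+|\gamma_2|}+\frac12\epsilon_0)}$, a choice dictated by the exponent inequality in Definition \ref{eps0}. The key claim will be that each such ball contains at most $O(U_j\, j^{-1.4})$ matched indices of $A_j^{R,m}$; summing over the $\lceil R/U_j\rceil$ balls needed to cover $[0,R]$ then yields $|E_j|\leq R/j^{1.4}$, which is (c).

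This in-ball bound is precisely what the technical Lemma \ref{EstLemma} will provide. Its proof will proceed by fixing two matched indices $r<s$ lying in the same ball and in $A_j^{R,m}$, and comparing horizontal drifts across the two factors. On the one hand the definition of $A_j^{R,m}$ forces both horizontal distances to remain at scale $2^{-j}$. On the other hand, Lemma \ref{new.l} guarantees that over a time window of length $\sim 2^j/\log^{12} 2^j$ neither factor orbit crosses the singularity, so Lemma \ref{koks} and Lemma \ref{ConDer} give sharp two-sided estimates $|f_1'^{(N)}|\sim t^{1+|\gamma_1|\pm\epsilon_2}$ and $|f_2'^{(M)}|\sim t^{1+|\gamma_2|\pm\epsilon_2}$. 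Because $|\gamma_1|>|\gamma_2|$, once $|i_s-i_r|$ or $|j_s-j_r|$ exceeds the critical scale $U_j$, the horizontal drift forced in the first coordinate strictly dominates the drift absorbable in the second, producing an incompatibility with $r,s\in A_j^{R,m}$. The exponent inequality \eqref{rweps0} is engineered so that this gap closes over the entire relevant range of scales.

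The main obstacle is the in-ball count. The subtlety is that in a matching $i_s-i_r$ and $j_s-j_r$ need not coincide, so one cannot simply equate orbit positions; one must track both factor orbits simultaneously and exploit $|\gamma_1|\neq|\gamma_2|$ as a structural obstruction to their joint horizontal alignment. All other ingredients — the measure bound on $B$, the density bound on $U_R$, and the ball-counting — are comparatively soft consequences of Proposition \ref{SetPro}, Lemma \ref{ConDer} and Birkhoff. The real analytic content is packaged into Lemma \ref{EstLemma}, and the logarithmic-factor room encoded in the definitions of $S^i(n_1)$ and $W^i$, together with the arithmetic of $\epsilon_0,\epsilon_2$ fixed in Definition \ref{eps0}, is precisely what converts the derivative gap $|\gamma_1|-|\gamma_2|>0$ into the quantitative $j^{-1.4}$ saving claimed in (c).
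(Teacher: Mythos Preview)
Your high-level plan (build $B$ from the good sets, define $U_R$ by ergodic/Birkhoff considerations, reduce (c) to an in-ball count via Lemma \ref{EstLemma}) matches the paper's, and (a)--(b) are indeed soft. But there are two genuine gaps in the core step.

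\emph{First, the covering of $[0,R]$ must use two scales, not one.} You cover $E_j$ by balls of a single radius $U_j\asymp 2^{j(\frac{1}{1+|\gamma_2|}+\frac{1}{2}\epsilon_0)}$. In the paper, Lemma \ref{EstLemma}(2) is a \emph{dichotomy}: for each center $w$ one of \eqref{sh} (scale $R_w^{\frac{1}{1+|\gamma_2|}+\epsilon_0}$) or \eqref{lon} (scale $R_w^{1-\epsilon_0}$) holds, and which one depends on whether $L_H(w)$ is realized by $d_H(x_w,x'_w)$ or by $d_H(y_w,y'_w)$. Accordingly, the paper covers $[0,R]$ adaptively: at each step it looks at the next uncovered point $w_u\in W$, checks which alternative of (2) holds there, and takes an interval of length $l_j^1$ or $l_j^2$ respectively. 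The case $R_w^{-1}=d_H(y_w,y'_w)$ genuinely requires \emph{both} scales simultaneously (this is exactly the sandwich \eqref{est} combined with \eqref{rweps0}); a single fixed $U_j$ cannot close that case, because from \eqref{maineq} at one time you only get one inequality on $\|x_w-x'_w\|$, not a contradiction.

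\emph{Second, the mechanism of the in-ball bound is misdescribed.} You say the drift ``produces an incompatibility with $r,s\in A_j^{R,m}$''. It does not: by property (1) of Lemma \ref{EstLemma}, within the ball the horizontal distances $d_H(x_r,x'_r)$ and $d_H(y_r,y'_r)$ are \emph{frozen} (the alternative \eqref{largehor} would already kick $r$ out of $A_j^{R,m}$). The obstruction is to $L(r)<\tfrac14$, i.e.\ to the \emph{vertical} coordinates staying close. With horizontal gaps fixed at scale $R_w^{-1}$, closeness forces \eqref{bird}, hence \eqref{maineq}; then the mismatch $t^{1+|\gamma_1|}$ vs.\ $t^{1+|\gamma_2|}$ is what makes \eqref{maineq} fail on one of the two windows. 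Your outline should be rewritten so that the count is on $\{r:\eqref{isom}\text{ holds and }L(r)<\tfrac14\}$, and so that the contradiction targets \eqref{maineq} at two different time scales.
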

We will give a proof of Proposition \ref{PropEst} in the next section. Let us first show how it implies Proposition \ref{PropClose}.

Before we give a complete proof let us give a sketch.
\paragraph{Sketch of the proof:} Fix a $(\cP_m,1/100)$-good matching $(i_s,j_s)_{s=0}^R$ of $(x,y)$ and $(x',y')$. It follows by the definition of $\cP_m$ that for most times  (see \textbf{(b)}) (except times where the orbits are close to the cusp) if the orbits are in one atom of $\cP_m$, then horizontal distances of $x,x'$ and $y,y'$ have to be small. Hence they have to belong to the set $\bigcup_{j}A_j^{R,m}$. Notice that the sets $(A_j^{R,m})_{j}$ are disjoint. Hence the cardinality of the matching is bounded above by the sum over the $(A_j^{R,m})_{j\in\N}$. Now by \textbf{(c)} it follows that this sum is small if the maximal horizontal distance is $\geq R^{-\frac{1}{1-\epsilon_0}}$ (the exponent 1.4 in \textbf{(c)} is just to make the series  $\sum_j j^{-1.4}$ summable). Hence if the matching occupies most of $[0,R]\cap \Z$, the sum has to be large and then there has to be a time where the horizontal distance is $\leq R^{-\frac{1}{1-\epsilon_0}}$.

\begin{proof}[Proof of Proposition \ref{PropClose}]
Fix $\delta>0$, $R\geq R_{\delta}$, $m\geq m_{\delta}$, $(x,y),(x',y')\in B$ and a  $(\cP_m,\frac{1}{100})$-good matching $(i_r,j_r)_{r=0}^R$ of $(x,y)$ and $(x',y')$. Notice that by \textbf{(b)}, the definition of $K_m^1$ and $K_m^2$ and the definition of $(A_j^{R,m})_{j\in \N}$, we have
$$
\{r\in\{0,...,R\}: (i_r,j_r)\in U_R(x,y)\times U_R(x',y')\}\subset \bigcup_{j\in \N} A_j^{R,m}.
$$


Hence, by $(\textbf{a})$, $(\textbf{b})$ and the definition of $A_j^{R,m}$, we have
\begin{equation}\label{ala}
\begin{aligned}
&\frac{1}{100}>\bar{f}_R^{P_m}((x,y),(x',y'))\\
&\geq 1-\frac{|(U_R(x,y))^c\cap (U_R(x',y'))^c\cap[0,R]|}{R}-\\
&\frac{1}{R}\sum_{j\geq0}|\{r\in [0,R]:(i_r,j_r)\in U_R(x,y)\times U_R(x',y')\}\cap A^{R,m}_j((x,y),(x',y'))|\\
&\geq\frac{9}{10}-\\&\frac{1}{R}\sum_{j\geq0}|\{r\in [0,R]:(i_r,j_r)\in U_R(x,y)\times U_R(x',y')\}\cap A^{R,m}_j((x,y),(x',y'))|
\end{aligned}
\end{equation}

Notice that by the definition of $A_j^{R,m}$, for $j\leq\frac{\log m}{2}$, we have
\begin{equation}\label{wpan}
\{r\in [0,R]:(i_r,j_r)\in U_R(x,y)\times U_R(x',y')\}\cap A^{R,m}_j((x,y),(x',y'))=\emptyset.
\end{equation}
Let $j_R$ be such that,
\begin{equation}\label{er}
2^{j_R}\leq R^{\frac{1}{1-\epsilon_0}}<2^{j_R+1}.
\end{equation}

As $\log m\gg 1$, by $(\textbf{c})$ and \eqref{wpan} we have
\begin{equation}
\begin{aligned}
\frac{1}{R}\sum_{j< j_R}|\{r\in [0,R]:(i_r,j_r)\in U_R(x,y)\times U_R(x',y')\}&\cap A^{R,m}_j((x,y),(x',y'))|\\
&\leq\frac{1}{R}\sum_{\frac{\log m}{2}\leq j\leq j_R-1}\frac{R}{j^{1.4}}\\
&\leq\frac{1}{1000}.
\end{aligned}
\end{equation}

Therefore and by \eqref{ala} there exists $j_1\geq j_R$ such that
\begin{equation}
\{r\in [0,R]:(i_r,j_r)\in U_R(x,y)\times U_R(x',y')\}\cap A^{R,m}_{j_1}((x,y),(x',y'))\neq\emptyset.
\end{equation}

By definition of $A^{R,m}_{j_1}$ and \eqref{er} it follows that there exists $(i_{r_0},j_{r_0})\in[0,R]^2$ such that
\begin{equation}
L_H(r_0)\leq R^{-\frac{1}{1-\epsilon_0}}.
\end{equation}

This finishes the proof of Proposition \ref{PropClose}.
\end{proof}

\section{Proof of Proposition \ref{PropEst}}\label{proofproest}
In this section we will state a lemma which implies Proposition \ref{PropEst}.


\begin{lm}\label{EstLemma} For every $\delta>0$ there exists $R_{\delta},m_{\delta}\in\mathbb{N}$ and $B=B_\delta\subset \T^{f_1}\times \T^{f_2}$, $\mu^{f_1}\times\mu^{f_2}(D)>1-\delta$
such that for every $R\geq R_{\delta}$, $m\geq m_{\delta}$, every $(x,y),(x',y')\in D$ there exists $U_R(x,y),U_R(x',y')\subset [0,R]$ such that \textbf{(a)},\textbf{(b)} hold and for every $(\cP_m,\frac{1}{100})$-good matching $(i_r,j_r)_{r=0}^{R}$ of $(x,y)$ and $(x',y')$,  and we have
\begin{itemize}

\item[$(1)$] For $(i_w,j_w)\in U_{R}(x,y)\times U_R(x',y')$ if $R_w^{-1}:=L_H(w)<2m^{-1}$, then for every $(i_r,j_r)\in B((i_w,j_w),\frac{R_{w}}{\log^5R_{w}})$ either
    \begin{equation}\label{isom}
    d_H(x_r,x'_r)=d_H(x_w,x'_w)\text{ and } d_H(y_r,y'_r)=d_H(y_w,y'_w) 
    \end{equation} or
    \begin{equation}\label{largehor}
    L_H(r)\geq 100 L_H(w).
    \end{equation}
\item[$(2)$] for every $(i_w,j_w)\in U_{R}(x,y)\times U_R(x',y')$ such that $L(w)<2m^{-1}$, we have at least one of the following inequalities
    \begin{equation}\label{sh}
    \begin{aligned}
      |\{r\in[-R_w^{\frac{1}{1+|\gamma_2|}+\epsilon_0},&
R_w^{\frac{1}{1+|\gamma_2|}+\epsilon_0}]:(i_r,j_r)\in B((i_w,j_w),R_w^{\frac{1}{1+|\gamma_2|}+\epsilon_0}),\\
      &\eqref{isom} \text{ holds }\ \ and\ \ L(r)<1/4\}|<\frac{R_w^{\frac{1}{1+|\gamma_2|}+\epsilon_0}}{\log^2R_w}
    \end{aligned}
    \end{equation}
    or
    \begin{equation}\label{lon}
    \begin{aligned}
      |\{r\in[-R_w^{1-\epsilon_0},R_w^{1-\epsilon_0}]:&(i_r,j_r)\in B((i_w,j_w),R_w^{1-\epsilon_0}),\\
      &\eqref{isom} \text{ holds }\ \ and\ \ L(r)<1/4\}|<\frac{R_w^{1-\epsilon_0}}{\log^2R_w}.
    \end{aligned}
    \end{equation}

\end{itemize}
\end{lm}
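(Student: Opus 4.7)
The plan is to define $D = D_\delta$ as the intersection of the good sets $S^i(n_1) \cap W^i$ (from \eqref{si} and Proposition \ref{SetPro}) in each factor, together with the set of points which are Birkhoff-generic with respect to the indicator of $K_m^1 \times K_m^2$. Taking $n_1 = n_1(\delta)$ sufficiently large and $m_\delta$ so that $\mu(K_m^i) > 1 - 10^{-3}$ gives $\mu^{f_1} \times \mu^{f_2}(D) > 1 - \delta$. Then $U_R(x,y) \subset [0,R] \cap \mathbb{Z}$ is defined as those $t$ for which $(T^{f_1} \times T^{f_2})_t(x,y) \in K_m^1 \times K_m^2$ and the pushed-forward first and second coordinates remain in $S^1(n_1) \cap W^1$ and $S^2(n_1) \cap W^2$ respectively. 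Property \textbf{(b)} is immediate, and \textbf{(a)} follows by Birkhoff's ergodic theorem once $R_\delta$ is large enough.

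The proof of \textbf{(1)} is a direct application of the Diophantine condition \eqref{setd}. Writing
\[
d_H(x_r, x'_r) = \|(x_h - x'_h) + \mathcal{N}(r)\alpha_1\|, \qquad \mathcal{N}(r) = N(x, i_r) - N(x', j_r),
\]
either $\mathcal{N}(r) = \mathcal{N}(w)$, giving the first alternative of \eqref{isom} in the $x$-coordinate, or $|\mathcal{N}(r) - \mathcal{N}(w)| \geq 1$. In the latter case the ball condition yields $|\mathcal{N}(r) - \mathcal{N}(w)| \leq C R_w/\log^5 R_w$, and $\alpha_1 \in \mathscr{D}$ combined with \eqref{setd} gives $\|(\mathcal{N}(r) - \mathcal{N}(w))\alpha_1\| \gtrsim R_w^{-1} \log^{3.9} R_w$, so by the triangle inequality $d_H(x_r, x'_r) \geq 100 L_H(w)$ and hence $L_H(r) \geq 100 L_H(w)$. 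The same argument for the $y$-coordinate (with $\alpha_2 \in \mathscr{D}$) handles the remaining case.

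Part \textbf{(2)} is the heart of the lemma and exploits $|\gamma_1| > |\gamma_2|$. For each matching index $r$ satisfying \eqref{isom} and $L(r) < 1/4$, expand the vertical coordinates in each factor. Subtracting the $x$- and $y$-equations eliminates the common flow-time offset $(i_r - i_w) - (j_r - j_w)$ and yields
\[
\bigl(f_1^{(N)}(x_{w,h}) - f_1^{(N)}(x'_{w,h})\bigr) - \bigl(f_2^{(M)}(y_{w,h}) - f_2^{(M)}(y'_{w,h})\bigr) = C + O(1),
\]
where $N = N(x_w, i_r - i_w)$, $M = M(y_w, j_r - j_w)$ and $C$ is independent of $r$. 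By the mean value theorem and Lemma \ref{ConDer}, the left side is within $O(1)$ of a function of $(u, v) = (i_r - i_w, j_r - j_w)$ behaving like $\pm R_w^{-1}(u^{1+|\gamma_1| \pm \epsilon_2} - v^{1+|\gamma_2| \pm \epsilon_2})$. Since $|\gamma_1| > |\gamma_2|$, a single step along a monotone matching ($\delta u, \delta v \geq 1$) changes the $f_1$-term by $\sim R_w^{-1} u^{|\gamma_1|}$ and the $f_2$-term only by $\sim R_w^{-1} v^{|\gamma_2|}$. Assume \eqref{lon} fails, i.e., there are at least $R_w^{1-\epsilon_0}/\log^2 R_w$ such matching pairs in the long window $P_1 = R_w^{1-\epsilon_0}$. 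Then the inequality \eqref{rweps0} of Definition \ref{eps0} is precisely what guarantees that the per-step change of the $f_1$-term in the short window $P_2 = R_w^{1/(1+|\gamma_2|) + \epsilon_0}$ dominates the allowable total variation along the constraint curve, forcing the short-window count to be at most $P_2/\log^2 R_w$, which is \eqref{sh}.

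The main obstacle is the quantitative bookkeeping in \textbf{(2)}: the Denjoy--Koksma estimates of Lemma \ref{ConDer} only hold off exceptional sets of logarithmic density, and keeping track of these uniformly across both scales $P_1$ and $P_2$ (and for both coordinates) is delicate. The crucial algebraic step is the subtraction that eliminates the $(i_r - i_w) - (j_r - j_w)$ term; without it, the $f_1$- and $f_2$-ergodic sums could compensate via a time shift and the contrast between the two exponents could not be exploited. Every piece of the elaborate numerical balance in Definition \ref{eps0} enters here in order to calibrate the per-step versus total-variation competition between the two windows.
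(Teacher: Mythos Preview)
Your construction of the good set via ergodic genericity for the indicator of $F=\prod_i\bigl(S^i(n_1)\cap W^i\cap K_m^i\bigr)$, your definition of $U_R$, and your proof of \textbf{(1)} via the Diophantine gap $\|k\alpha_i\|\gtrsim (m\log^2 m)^{-1}$ all match the paper.

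The gap is in \textbf{(2)}. Two problems. First, you write the constraint as approximately $R_w^{-1}\bigl(u^{1+|\gamma_1|}-v^{1+|\gamma_2|}\bigr)=O(1)$, putting the same coefficient $R_w^{-1}$ on both Birkhoff terms; in fact the coefficients are $d_H(x_w,x'_w)$ and $d_H(y_w,y'_w)$, and only the larger of the two equals $R_w^{-1}$ while the other may be arbitrarily small. The paper handles this by an explicit case split on which coordinate realises $L_H(w)$. Second, and more seriously, your claim that a single matching step changes the $f_1$-term by $\sim R_w^{-1}u^{|\gamma_1|}$ presumes the derivative estimate of Lemma~\ref{ConDer} at \emph{every} integer time. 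It does not hold there: that lemma delivers the bound only on a set $G_T$ of density $1-O(\log^{-3}T)$, and at a bad time the single-step increment $f_1(x_{w,h}+N\alpha_1)-f_1(x'_{w,h}+N\alpha_1)$ can be enormous (near the singularity) or negligible. A per-step count cannot be maintained across the bad set, so your mechanism for deducing \eqref{sh} from the failure of \eqref{lon} breaks down.

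The paper's argument for \textbf{(2)} avoids both issues. It assumes both \eqref{sh} and \eqref{lon} fail and selects just \emph{two} witnesses, $r_0$ in the short window and $r_1$ in the long window, with $i_{r_k}-i_w\in G_T$; the failure of \eqref{sh} and \eqref{lon} supplies enough matching indices for this pigeonhole to succeed. If $R_w^{-1}=d_H(x_w,x'_w)$, then already at $r_1$ the $f_1'$-term dominates the $f_2'$-term by the second clause of Definition~\ref{eps0}, contradicting the mean-value inequality \eqref{maineq}. If $R_w^{-1}=d_H(y_w,y'_w)$, one uses \eqref{maineq} at $r_0$ to bound $\|x_w-x'_w\|$ from below and at $r_1$ to bound it from above; these two bounds are incompatible precisely by \eqref{rweps0}. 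No step-by-step control is needed---only the derivative bounds at two carefully chosen good times.
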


We will show how Lemma \ref{EstLemma} implies Proposition \ref{PropEst}. Before that we will sketch the main ideas.
\paragraph{Sketch of the proof.} The main difficulty is the proof of \textbf{(c)} in Proposition \ref{PropEst}.
Fix $j$ and the corresponding $A_j^{R,m}$. (1) in Lemma \ref{EstLemma} tells us that if $w,r\in A_j^{R,m}$ for which $(i_r,j_r)\in B((i_w,j_w),\frac{R_w}{\log^5 R_w})$ then \eqref{isom} holds. Indeed, if not that \eqref{largehor} holds, but then the horizontal distances at $r$ is much larger that the horizontal distance at $w$ hence they can not be in the same $A_j^{R,m}$ (horizontal distances in $A_j^{R,m}$ differ multiplicatively at most by $2$). Now \eqref{isom} means that the points at times $w,r$ move isometrically: their horizontal distance has to be the same (we have an isometry in the base). Then we use (2) to say that if \eqref{isom} holds, then the number of $r\in [-M,M]\cap \Z$ for which the points matching is less that $\frac{M}{\log^2M}$ (here $M$ is one of $R_w^{\frac{1}{1+|\gamma_2|}+\epsilon_0}$ or $R_w^{1-\epsilon_0}$). Hence the number of arrows is $M/\log^2M$ small. Now partitioning the interval $[0,R]$ into windows of size $M$, using the fact that on each we have at most $M/\log^2M$ good arrows and summing over the windows gives \textbf{(c)}.

\begin{proof}[Proof of Proposition \ref{PropEst}]
Fix $\delta>0$. We only have to prove $\textbf{(c)}$ using $(1)$ and $(2)$ of Lemma \ref{EstLemma}.

Fix $j$ as in $\textbf{(c)}$. Divide the interval $[0,R]$ into disjoint intervals $I_1,\ldots,I_k$ of length $\frac{(2^j)^{\frac{1}{1+|\gamma_2|}+\epsilon_0}}{\sqrt{\log2^j}}$ or $\frac{(2^j)^{1-\epsilon_0}}{\sqrt{\log2^j}}$ by the following procedure. Fix the smallest element $w_1\in W=\{r\in [0,R]:(i_r,j_r)\in U_R(x,y)\times U_R(x',y')\}\cap A^{R,m}_{j}((x,y),(x',y'))$. If $(i_{w_1}, j_{w_1})$ satisfies $(1)$ (satisfies $(2)$) let $I_1$ be an interval with right endpoint $w_1$ and length $l_j^1:=\frac{(2^j)^{\frac{1}{1+|\gamma_2|}+\epsilon_0}}{\sqrt{\log2^j}}$ (and length $l_j^2:=\frac{(2^j)^{1-\epsilon_0}}{\sqrt{\log2^j}}$). Now inductively for $u> 1$, we pick $w_u$ to be the smallest  element in $W\setminus(I_1\cup...\cup I_{u-1})$. According to whether $(i_{w_u},j_{w_u})$ satisfies $(1)$ or $(2)$, we let $I_u$ to be the interval with right endpoint $w_u$ and length $l_j^1$ or $l^2_j$. We continue until we cover $W$.
Notice that
Since $2^j\leq R^{\frac{1}{1-\epsilon_0}}$, it follows that $k>1$. Moreover by definition, we have
\begin{equation}\label{qwe}
|\{i\in \{1,\ldots,k\}: |I_i|=l_j^1\}|\leq  \left[\frac{R}{l_j^1}\right] +2
\end{equation}
and
\begin{equation}\label{weq}
|\{i\in \{1,\ldots,k\}: |I_i|=l_j^2\}|\leq  \left[\frac{R}{l_j^2}\right] +2.
\end{equation}
For each $i$ by definition of $A_j^{R,m}$, we have
\begin{equation}
R_{w_i}^{-1}=L_H(w_i)\in[2^{-j-1},2^{-j}].
\end{equation}
So
\begin{equation}
\frac{R_{w_i}}{\log^5R_{w_i}}\geq|I_i|.
\end{equation}

Therefore, by $(1)$ and the definition of $A_j^{R,m}$, we have
\begin{equation}
\begin{aligned}
\{r\in [0,R]:(i_r,j_r)\in U_R(x,y)&\times U_R(x',y')\}\cap A^{R,m}_{j}\left((x,y),(x',y')\right)\cap I_i\\
&\subset\{r\in I_i: \eqref{isom}\text{ holds },L(r)<2m^{-1}\}.
\end{aligned}
\end{equation}

By $(2)$ and the definition of $I_i$, we have one of the following:
\begin{equation}\label{eq1}
|\{r\in I_i:\eqref{isom}\text{ holds },L(r)<2m^{-1}\}|\leq\frac{(2^j)^{1-\epsilon_0}}{\log^22^j}.
\end{equation}
or
\begin{equation}\label{eq2}
|\{r\in I_i:\eqref{isom}\text{ holds },L(r)<2m^{-1}\}|\leq\frac{(2^j)^{\frac{1}{1+|\gamma_2|}+\epsilon_0}}
{\log^22^j}.
\end{equation}
Summing over $i\in\{1,\ldots,k\}$ by \eqref{eq1}, \eqref{qwe}, \eqref{eq2} and \eqref{weq}, we get
\begin{equation}\label{MeaEst}
\begin{aligned}
&|\{r\in [0,R]:(i_r,j_r)\in U_R(x,y)\times U_R(x',y')\}\cap A^{R,m}_{j}((x,y),(x',y'))|\\
&\leq\frac{R}{l_j^2}\frac{(2^j)^{1-\epsilon_0}}{\log^22^j}+
\frac{R}{l_j^1}\frac{(2^j)^{\frac{1}{1+|\gamma_2|}+\epsilon_0}}{\log^22^j}\\
&\leq\frac{R}{\log^{\frac{3}{2}}2^j}+\frac{R}{\log^{\frac{3}{2}}2^j},
\end{aligned}
\end{equation}
the last inequality by the definition of $l_j^1$ and $l_j^2$.
By \eqref{MeaEst}, we have
\begin{equation}
|\{r\in [0,R]:(i_r,j_r)\in U_R(x,y)\times U_R(x,y)\}\cap A^{R,m}_{j}((x,y),(x',y'))|\leq\frac{R}{j^{1.4}}.
\end{equation}
This finishes the proof.
\end{proof}

\section{Proof of Lemma \ref{EstLemma}}\label{proofestlemma}
Lemma \ref{EstLemma} is the crucial part in the proof of Theorem \ref{MainTheorem}. All propositions so far were based on some general combinatorial considerations and did not use too much of the flows we deal with. It is Lemma \ref{EstLemma} where specific properties of Kochergin flows play important role. Before we give a proof let us outline main ideas.

\paragraph{Sketch of the proof.} We first need to define the good set $B$. This is done by some standard ergodic theorem type of reasoning. We want points in $B$ to approach the singularity in a controlled way and so that we can use Lemmas \ref{koks} and \ref{ConDer} which allow to control relative speed between points. The set $U_R$ is the set of good times, i.e.  for times in $U_R$ we want to stay far away from the cusp and have good estimates for derivative. The core of the proof is (1) and (2). The idea behind (1) is the following: take two close points $x,x'\in \T^{f_1}$ (the same happens for $y,y'\in \T^{f_2}$). We look at their horizontal distance at the begining and after time $t<\frac{R_w}{\log^5 R_w}$. What is important is that the window is shorter (by a power of $\log$) than their horizontal distance. Now (1) tells us that either the points move isometrically together \eqref{isom} or, if not their horizontal distance at time $t$ is multiplicatively large compared to the starting distance. To get this we use diophantine assumptions -- the orbit of a point can not come too close if the time we iterate is to short.
For the proof of (2) we assume that points move isometrically. In this case for them to be close after time $t$ means that $f_1^{(n)}(x)-f_1^{(n)}(x')$ is close to $f_2^{(n)}(y)-f_2^{(n)}(y')$ (since first coordinates move isometrically, flow coordinates have to match). But the first birkhoff sums are of order $n^{1+\gamma_1}(x_h-x_h')$ and the second are  $n^{1+\gamma_2}(y_h-y_h')$. Moreover since by assumptions points move isometrically, $x_h-x'_h$ and $y_h-y'_h$ are constant. This amounts to the problem $|C_1n^{1+\gamma_1}-C_2n^{1+\gamma_2}|$ is small. But $\gamma_1\neq \gamma_2$ so this expression can not be small on the whole interval.

For $\delta>0$  let (see Lemma \ref{ConDer})
\begin{equation}\label{ef}
F=F_\delta:=\prod_{i=1,2}\left(S^i(n_1)\cap W^i\cap\{x\in\mathbb{T}^{f_i}:f_i(x_h)<
\delta^{-\frac{3}{1-\gamma_i}}\}\right)
\end{equation}
It follows by the definition of $S^i(n_1)\cap W^i$ that $\mu^{f_1}\times\mu^{f_2}(F)\geq 1-\delta^2$.




\paragraph{Set of good points.}\label{par}
By ergodic theorem (for $\chi_F$), we know that there exist a set $B=B_\delta\subset\mathbb{T}^{f_1}\times\mathbb{T}^{f_2}$, $\mu^{f_1}\times\mu^{f_2}(B)>1-\delta$ and there exists $n_3(\delta)\in\mathbb{N}$ such that for every $(x,y)\in B$ and $R\geq n_3(\delta)$, we have

\begin{equation}
|\{k\in[0,R]:(T^{f_1}\times T^{f_2})_{k}(x,y)\in F\}|\geq(1-\delta)R.
\end{equation}

For $(x,y)\in B$ let
\begin{equation}\label{ur}
\begin{aligned}
U_R=U_R(x,y):=\{k\in[0,R]:(T^{f_1}\times T^{f_2})_{k}(x,y)\in F\}.
\end{aligned}
\end{equation}
Notice that $\lambda(U_R)\geq\frac{99R}{100}$ ($\delta$ is small) and (see Section \ref{part})
\begin{equation}\label{ukm}
 \text{for every }k\in U_R, (T^{f_1}\times T^{f_2})_{k}(x,y)\in K^1_{\delta^{-1}}\times K^2_{\delta^{-1}}.
\end{equation}



Notice that with this definition of $B$, \textbf{(a)} and \textbf{(b)} follow automatically by \eqref{ur} and \eqref{ukm} (defining $m_\delta:=\delta^{-1}$).
Therefore we only need to prove (1) and (2).
\subsection{Proof of $(1)$}
$(1)$ is a straightforward consequence of the following lemma.
\begin{lm}Let $i=1,2$. There exists $c(\alpha_i,f_i)>0$ such that for $z,z'\in\T^{f_i}$ for which $W^{-1}:=d_H(z,z')<c(\alpha_i,f_i)$, we have for every $t\in[0,\frac{W}{\log^4W}]$ either $d_H(T^{f_{i}}_tz,T^{f_{i}}_tz')=d_H(z,z')$ or
$d_H(T^{f_{i}}_tz,T^{f_{i}}_tz')>100d_H(z,z')$.
\end{lm}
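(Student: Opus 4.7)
The plan is to translate the horizontal distance under $T^{f_i}_t$ into an arithmetic quantity depending on $\alpha_i$, and then invoke the diophantine hypothesis $\alpha_i\in\mathscr{D}$. By the special‑flow formula, the horizontal coordinate of $T^{f_i}_t z$ is $z_h+N(z,t)\alpha_i\pmod 1$, so
$$
d_H\bigl(T^{f_i}_t z,\,T^{f_i}_t z'\bigr)=\bigl\|(z_h-z'_h)+k\alpha_i\bigr\|,\qquad k:=N(z,t)-N(z',t)\in\Z.
$$
If $k=0$, the horizontal distance is preserved \emph{exactly}, giving the first alternative. In the case $k\neq 0$, the triangle inequality for $\|\cdot\|$ yields $d_H(T^{f_i}_t z,T^{f_i}_t z')\ge \|k\alpha_i\|-W^{-1}$, so it suffices to show $\|k\alpha_i\|>101/W$.

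Next I would establish a universal upper bound on $|k|$. Writing $c_{f_i}:=\min_{\T} f_i>0$, the identity
$$
f_i^{(N)}(z_h)-z_v=\bigl(f_i(z_h)-z_v\bigr)+\sum_{j=1}^{N-1}f_i(z_h+j\alpha_i)\;\ge\;(N-1)c_{f_i}
$$
together with the defining inequality $f_i^{(N(z,t))}(z_h)\le z_v+t$ forces $N(z,t)\le 1+t/c_{f_i}$ for every $z\in\T^{f_i}$ — crucially, with no hypothesis on how close $z$ is to the cusp. Therefore
$$
|k|\;\le\; N(z,t)+N(z',t)\;\le\; 2+\frac{2t}{c_{f_i}}\;\le\; K:=\frac{3W}{c_{f_i}\log^4 W}
$$
whenever $W$ is sufficiently large.

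Finally I would apply continued fraction arithmetic. Pick $n$ maximal with $q_n\le|k|$; then $|k|<q_{n+1}$ and the classical bound $\|k\alpha_i\|\ge\|q_n\alpha_i\|>1/(2q_{n+1})$ holds. Combining $\alpha_i\in\mathscr{D}$ (so $q_{n+1}<C(\alpha_i)q_n\log q_n(\log n)^2$) with the Fibonacci‑type estimate $n\le C\log q_n$ and plugging in $q_n\le K\le 3W/(c_{f_i}\log^4 W)$, I obtain
$$
q_{n+1}\;\le\; C'(\alpha_i,f_i)\,\frac{W(\log\log W)^2}{\log^3 W}.
$$
Hence $\|k\alpha_i\|>\log^3 W\big/\bigl(2C'(\alpha_i,f_i)W(\log\log W)^2\bigr)>101/W$ once $W$ is large enough, which fixes the choice of the constant $c(\alpha_i,f_i)$. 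Combined with the triangle inequality above, this gives $d_H(T^{f_i}_t z,T^{f_i}_t z')>100\,d_H(z,z')$.

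The main obstacle is the uniform bound on $|k|$ in step 2. The naive estimate $N(z,t)\le(z_v+t)/c_{f_i}$ is useless near the cusp, where $z_v$ can be comparable to $f_i(z_h)$ and $f_i(z_h)\to\infty$. The fix is to notice that an arbitrarily short first crossing still counts as \emph{one} crossing; after it the flow restarts at vertical height $0$ and every subsequent crossing genuinely costs at least $c_{f_i}$, so the extra ``$+1$'' absorbs the ambiguous first step. After that point the argument is a clean continued‑fraction computation: the $\log^4 W$ shrinkage of the time window is engineered precisely to swallow the $\log q_n\,(\log n)^2\le\log W\,(\log\log W)^2$ loss coming from $\mathscr{D}$ and still leave a power of $\log W$ of slack.
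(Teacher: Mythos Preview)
Your proof is correct and follows essentially the same approach as the paper's: reduce to the arithmetic identity $d_H(T^{f_i}_t z,T^{f_i}_t z')=\|(z_h-z'_h)+k\alpha_i\|$, bound $|k|$ by $O(W/\log^4 W)$ using $f_i\ge c_{f_i}$, and finish with the diophantine property of $\alpha_i\in\mathscr{D}$. The paper packages the last step as a single inequality $\inf_{|k|\le m}\|k\alpha_i\|\ge C(\alpha_i)/(m\log^2 m)$ rather than unwinding the continued fraction, and is more casual about the ``$+1$'' for the first crossing that you handle explicitly, but there is no substantive difference.
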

\begin{proof} Recall that since $\alpha_i\in \mathscr{D}$, we have
\begin{equation}\label{DioEst}
\inf_{|k|\leq m}\|k\alpha_i\|\geq\frac{C(\alpha_i)}{m\log^2m}.
\end{equation}

Fix $t\in [0,\frac{W}{\log^4W}]$. Then (by the definition of special flow) and $f_i>0$ we know that the first coordinates of $T^{f_{i}}_tz$ and $T^{f_{i}}_tz'$ are $z_h+m_t\a_i$ and $z'_h+n_t\a_i$ for some $0\leq m_t,n_t\leq\frac{W}{(\inf_\T f_i)\log^4W}\leq \frac{W}{\log^3W}$ provided that $W\geq C'(f_i)$ for some constant $C'(f_i)>0$. If $m_t=n_t$ then $d_H(T_t^{f_i}z,T_t^{f_i}z')=d_H(z,z')$ and the proof is finished . If $m_t\neq n_t$, then by \eqref{DioEst}
\begin{multline*}
d_H(T^{f_{i}}_tz,T^{f_{i}}_tz')=\|z_h-z'_h+(m_t-n_t)\a_i\|\geq
\|(m_t-n_t)\a_i\|- W^{-1}\geq\\
 \inf_{|k|\leq \frac{W}{\log^3W}}\|k\alpha_i\|-W^{-1}\stackrel{\eqref{DioEst}}{\geq} \frac{C(\alpha_i)\log W}{W}\geq 100W^{-1},
\end{multline*}
for $W\geq c'(\alpha_i)$. This finishes the proof.
\end{proof}
Now to get $(1)$, if \eqref{isom} does not hold (assume wlog that the first part of \eqref{isom} does not hold) we apply the above lemma for $t=i_r-i_w$ and $z=x_w, z'=x'_w$. This finishes the proof of $(1)$.





\subsection{Proof of $(2)$}
 We will use the notation from Definition \ref{eano}. To simplify the notation we will denote the horizontal (circle) coordinate of a point $z_w\in T^{f_i}$, $i=1,2$ by the same symbol, i.e. $(z_w)_h=z_w$. It will be clear from the context whether we consider $z_w$ as a point in the flow space or a point on the circle. Fix $(i_w,j_w)\in U_R\times U_R$. It follows that $x_w\in S^1(n_1)\cap W^1, y_w\in S^2(n_1)\cap W^2$ (see \eqref{ur} and \eqref{ef})

 We claim that for every $r$ such that \eqref{isom} holds and $L(r)<1/4$, we have
\begin{equation}\label{bird}
\begin{aligned}
|(f_1^{(N(x_w,i_r-i_w))}(x_w)&-f_1^{(N(x_w,i_r-i_w))}(x'_w))\\
&-(f_2^{(M(y_w,i_r-i_w))}(y_w)-f_2^{(M(y_w,i_r-i_w))}(y'_w))|\leq \frac{1}{2}
\end{aligned}
\end{equation}

Indeed $L(r)<1$ means that the second coordinates of $(x_r, x'_r)$ and $(y_r,y'_r)$ are close. However $x_r=T^{f_1}_{i_r-i_w}(x_w)$ and $x_r=T^{f_2}_{j_r-j_w}(x_w)$ (the same for $x'_r$ and $y'_r$). by \eqref{isom} we know that the action on the circle coordinate for $x_w$ and $x'_w$ is isometric, hence by the definition of special flow

$$
\begin{aligned}
&|[(i_r-i_w)-f^{(N(x_w,i_r-i_w))}(x_{w})]-
[(j_r-j_w)-f^{(N(x_w,i_r-i_w))}(x'_{w})]|<1/4,\\
&|[(i_r-i_w)-g^{(M(y_w,i_r-i_w))}(y_{w})]-
[(j_r-j_w)-g^{(M(y_w,i_r-i_w))}(y'_{w})]|<1/4.
\end{aligned}
$$
Then \eqref{bird} follows by triangle inequality.
Moreover, for every $r\in\N$ such that $(i_r,j_r)\in B((i_w,j_w),R_w^{1-\epsilon_0})$, \eqref{bird} is equivalent to
\begin{equation}\label{maineq}
|f_1'^{(N(x_w,i_r-i_w))}(\theta_r)(x_{w}-x'_{w})-
f_2'^{(M(y_w,i_r-i_w))}(\theta'_r)(y_w-y'_w)|\leq \frac{1}{2}.
\end{equation}
for some $\theta_r\in[x_w,x'_w]$ and $\theta'_r\in[y_w,y'_w]$. Indeed, this just follows by the fact $x_w,x'_w\in S_1(n_1)$, $y_w,y'_w\in S_2(n_1)$ (see \eqref{si}) and by Lemma \ref{new.l} so $f_1^{(N(x_w,i_r-i_w))}$ is differentiable on $[x_w,x'_w]$ and
$f_2^{(M(y_w,i_r-i_w))}$ is differentiable on $[y_w,y'_w]$.

For $T\in \R$ let  $G_T$ be as in Lemma \ref{ConDer}. We will assume that $(2)$ does not hold and get a contradiction with \eqref{maineq}. Then we have the following crucial

\textbf{Claim}. There exists $r_1>r_0>w$, such that, : \begin{enumerate}
    \item[$(i)$] $(i_{r_0},j_{r_0})\in B\left((i_w,j_w),R_w^{\frac{1}{1+|\gamma_2|}+\epsilon_0}\right)\setminus B\left((i_w,j_w),\frac{1}{2}R_w^{\frac{1}{1+|\gamma_2|}+\epsilon_0}\log^{-2}R_w\right)$;
	 \item[$(ii)$] $i_{r_0}-i_w\in G_{R_w^{\frac{1}{1+|\gamma_2|}+\epsilon_0}}$;
 \item[$(iii)$] $(i_{r_1},j_{r_1})\in B\left((i_w,j_w),R_w^{1-\epsilon_0}\right)\setminus B\left((i_w,j_w),\frac{1}{2}R_w^{1-\epsilon_0}\log^{-2}R_w\right)$;
	 \item[$(iv)$] $i_{r_1}-i_w\in G_{R_w^{1-\epsilon_0}}$;
	\end{enumerate}
Before we prove the \textbf{Claim} let us show how it gives a contradiction and hence also proofs $(2)$. Recall that $$R_w^{-1}=L_H(w)=\max(d_H(x_w,x'_w),d_H(y_w,y'_w)).$$ If $R_w^{-1}=d_H(x_w,x'_w)$ then using Lemma \ref{ConDer} for $x_w\in S^1(n_1)\cap W^1$ and $y_w\in S^2(n_1)\cap W^2$ and $T=R^{1-\epsilon_0}$, using $(iii)$ and $(iv)$ we get (since $\gamma_1<\gamma_2<0$)
\begin{multline*}
|f_1'^{(N(x_w,i_{r_1}-i_w))}(\theta_r)(x_{w}-x'_{w})|\geq R_w^{(1-2\epsilon_0)(1+|\gamma_1|-\epsilon_2)}\|x_w-x'_w\|\geq\\
 R_w^{1+|\gamma_2|+\epsilon_2}\|y_w-y'_w\|+100\geq
|f_2'^{(M(y_w,i_r-i_w))}(\theta'_r)(y_w-y'_w)|+100
\end{multline*}
which contradicts \eqref{maineq}.

Hence we have to consider the case $R_w^{-1}=d_H(y_w,y'_w)$. In this case we will use $(i)$-$(iv)$ of the \textbf{Claim}.

By \eqref{maineq} for $r_0$ and $r_1$, we have
\begin{equation}\label{est}
\begin{aligned}
&\frac{|f_2'^{(M(y_w,i_{r_0}-i_w))}(\theta'_{r_0})|\|y_w-y'_w\|-\frac{1}{2}}
{|f_1'^{(N(x_w,i_{r_0}-i_w))}(\theta_{r_0})|}\leq\|x_w-x'_w\|\\
&\leq\frac{|f_2'^{(M(y_w,i_{r_1}-i_w))}(\theta'_{r_1})|\|y_w-y'_w\|+\frac{1}{2}}
{|f_1'^{(N(x_w,i_{r_1}-i_w))}(\theta_{r_1})|}.
\end{aligned}
\end{equation}
We use Lemma \ref{ConDer} twice, i.e. first for $x_w,y_w$ and $i_{r_0}-i_w$ using $(i)$ and $(ii)$ and then for $x_w,y_w$ and $i_{r_1}-i_w$ using $(iii)$ and $(iv)$ to get (using that $\|y_w-y'_w\|=R_w^{-1}$ and \eqref{est})
$$
\frac{R_w^{(\frac{1}{1+|\gamma_2|}+\frac{1}{2}\epsilon_0)
(1+|\gamma_2|-\epsilon_2)-1}-\frac{1}{2}}
{R_w^{(\frac{1}{1+|\gamma_2|}+\epsilon_0)(1+|\gamma_1|+\epsilon_2)}}\leq
\frac{R_w^{(1-\epsilon_0)(1+|\gamma_2|+\epsilon_2)-1}+\frac{1}{2}}
{R_w^{(1-2\epsilon_0)(1+|\gamma_1|-\epsilon_2)}},
$$
this however is a contradiction with the choice of $\epsilon_0,\epsilon_2>0$ in Definition \ref{eps0}. Therefore we only have to prove the \textbf{Claim}.

\paragraph{Proof of the \textbf{Claim}.}
We will  prove $(i)$ and $(ii)$ the proof of (iii) and $(iv)$ follows the same lines. Since we assume that $(2)$ does not hold, in particular if follows that \eqref{sh} is not satisfied. For simplicity denote $Z_w:=R_w^{\frac{1}{1+|\gamma_2|}+\epsilon_0}$.
 Notice that by Lemma \ref{ConDer}, the measure of $G_{Z_w}\subset [0,Z_w]$ is at least $Z_w(1-4\log^{-3}Z_w)$. Moreover, for $C=\min_{i=1,2}(\inf_\T f_i)$. Then
\begin{equation}\label{int.point}
\left|\{n\in[0,Z_w]\cap \Z: n\in G_{Z_w}\}\right|\geq Z_w(1-4C^{-1}\log^{-3}Z_w),
\end{equation}
this follows by the fact that $N(x,t)$ is locally constant (on intervals of length $\geq C$).
Since \eqref{sh} does not hold, we have for
$$
\mathcal{B}_w:=B\left((i_w,j_w),Z_w\right)\setminus B\left((i_w,j_w),\frac{1}{2}Z_w\log^{-2}R_w\right)
$$
that
$$
\lambda(\mathcal{B}_w)\geq \frac{1}{2}Z_w\log^{-2}R_w
\gg 4C^{-1}Z_w\log^{-3}Z_w.
$$
Therefore and by \eqref{int.point}, we get
$$
\mathcal{B}_w\cap \left\{(i_r-i_w,j_r-j_w): r\in G_{Z_w}\right\}\neq \emptyset.
$$
Take $(i_{r_0},j_{r_0})$ to be any point in the intersection. This finishes the proof.

\section{Appendix}
\subsection{Proof of Lemma \ref{new.l}}
\begin{proof}
We will give the proof for $i=1$ (the proof in case $i=2$ follows the same lines).
Notice that since $z\in S^1(n_1)$ and $N(z,t)\leq\frac{t}{c}$, for $t<\frac{d_H(z,z')^{-1}}{|\log^7d_H(z,z')|}$, we have
\begin{equation}
\begin{aligned}
d_H(T^{f_1}_tz,0)\geq\frac{1}{\frac{d_H(z,z')^{-1}}{|\log^7d_H(z,z')|}\log^3(\frac{d_H(z,z')^{-1}}{|\log^7d_H(z,z')|})}\geq d_H(z,z')|\log^2d_H(z,z')|
\end{aligned}
\end{equation}
Therefore, we have that for $t\leq\frac{d_H(z,z')^{-1}}{|\log^7d_H(z,z')|}$
\begin{equation}
0\notin[z_h+N(z,t)\alpha_1,z'_h+N(z,t)\alpha_1].
\end{equation}

Therefore, to finish the proof, it is enough to show that for $z\in S^1(n_1)$,
\begin{equation}\label{Nestimate}
N(z,t)\geq\frac{t}{\log^5t}.
\end{equation}

Suppose that \eqref{Nestimate} is not true. Notice that by definition of $N(z,t)$, we have
\begin{equation}
t<f_1^{(N(z,t)+1)}(z).
\end{equation}

Therefore and by \eqref{DK1}, we have,
\begin{equation}
\begin{aligned}
t<f_1^{(N(z,t)+1)}(z)&\leq N(z,t)\log^3N(z,t)<\frac{t}{\log^5t}\log^3(\frac{t}{\log^5t})\\
&\leq \frac{t}{\log^2t}.
\end{aligned}
\end{equation}
This contradiction shows that \eqref{Nestimate} holds.

\end{proof}
\subsection{Proof of Lemma \ref{ConDer}}

We will conduct the proof for $S^1(n_1)\cap W^1$. by the definition of $S^1(n_1)$ and $N(x,t)\leq\frac{t}{c}$($c=\inf_{\mathbb{T}}f_i$), we have,
\begin{equation}\label{disEsit1}
\min_{j\in[0,t)}\|x_h+j\alpha_1-0\|\geq\frac{1}{t\log^4t}.
\end{equation}

Notice that $d_H(\theta_h,x_h)\leq\frac{1}{T\log^{2P_1}T}$ and $t\in[0,T]$, thus we have,
\begin{equation}\label{disEsit2}
\min_{j\in[0,t)}\|\theta_h+j\alpha_1-0\|\geq\frac{1}{t\log^{5}t}.
\end{equation}

By \eqref{DK2} for $\theta_h$ and $s\in N$ such that $q_s\leq N(x,t)<q_{s+1}$, we have
\begin{equation}
\begin{aligned}
|f_1'^{(N(x,t))}(\theta_h)|&\leq f_1'((\theta_h)_{\min}^{N(x,t)})+|\gamma|8N(x,t)^{1+|\gamma_1|+\frac{1}{2}\epsilon_2}\\
&\leq (N(x,t)\log^5N(x,t))^{1+|\gamma_1|+\frac{1}{2}\epsilon_2}\\
&\leq N(x,t)^{1+|\gamma_1|+\frac{2}{3}\epsilon_2}<t^{1+|\gamma_1|+\epsilon_2},
\end{aligned}
\end{equation}
which proves the upper bound.

To get the lower bound, notice that for $x\in S^1(n_1)\cap W^1\subset W^1$, outside a set of $t$'s of measure at most $\frac{1}{\log^3T}$, by Proposition \ref{SetPro}, we have
\begin{equation}
|f_1'^{(N(x,t))}(x_h)|\geq\frac{N(x,t)^{1+|\gamma_1|}}{\log^{P_1}N(x,t)}.
\end{equation}

Fix a ``good'' $t$ as above. Notice by Lemma \ref{new.l} (for $\eta\in[\theta_h,x_h]$),
\begin{equation}
\begin{aligned}
|f_1'^{(N(x,t))}(\theta_h)-f_1'^{(N(x,t))}(x_h)|=|f_1''^{(N(x,t))}(\eta)|\|\theta_h-x_h\|.
\end{aligned}
\end{equation}

Moreover, by \eqref{disEsit1} and \eqref{disEsit2}, we have
\begin{equation}
\min_{j\in[0,t)}\|\eta+j\alpha_1-0\|\geq\frac{1}{t\log^5t}.
\end{equation}

Thus, by \eqref{DK3}, we have
\begin{equation}
|f_1''^{(N(x,t))}(\eta)|\leq N(x,t)^{2+|\gamma_1|}\log^{15}N(x,t)
\end{equation}
and so(since $N(x,t)<ct<cT$)
\begin{equation}
\begin{aligned}
|f_1''^{(N(x,t))}(\eta)|\|\theta_h-x_h\|&\leq (N(x,t)^{2+|\gamma_1|}\log^{15}N(x,t))\frac{1}{T\log^{2P_1}T}\\
&\leq \frac{1}{2}\frac{N(x,t)^{1+|\gamma_1|}}{\log^{P_1}N(x,t)}\leq\frac{1}{2}|f_1'^{(N(x,t))}(x_h)|.
\end{aligned}
\end{equation}

Finally, we have
\begin{equation}
\begin{aligned}
|f_1'^{(N(x,t))}(\theta_h)|&\geq|f_1'^{(N(x,t))}(x_h)|-|f_1'^{(N(x,t))}(\theta_h)-f_1'^{(N(x,t))}(x_h)|\\
&\geq\frac{1}{2}|f_1'^{(N(x,t))}(x_h)|\geq\frac{1}{2}\frac{N(x,t)^{1+|\gamma_1|}}{\log^{P_1}N(x,t)}\\
&\geq N(x,t)^{1+|\gamma_1|-\frac{2}{3}\epsilon_2}>t^{1+|\gamma_1|-\epsilon_2}.
\end{aligned}
\end{equation}
where the last inequality follows by \eqref{Nestimate} (since $x\in S^1(n_1)$).

This finishes the proof of Lemma.

\subsection*{Acknowledgements}
The authors would like to thank Anatole Katok for his patience, help, encouragement and deep insight. The authors would also like to thank Jean-Paul Thouvenot for several discussions on the subject. The authors would also like to thank the anonymous referee's many helpful suggestions which improve the article's quality a lot.

\end{document}